\newcommand{\be}{\begin{eqnarray}}
\newcommand{\ee}{\end{eqnarray}}
\newcommand{\e}{{\varepsilon}}
\newcommand{\al}{\alpha}
\newcommand{\Dk}{{\mathcal D}}
\newcommand{\La}{{\Lambda}}
\newcommand{\cz}{Calder\'on-Zygmund}
\newcommand{\vf}{\varphi}
\newcommand{\pd}{\partial}
\newcommand{\dist}{\operatorname{dist}}
\newcommand{\ci}[1]{_{{}_{\scriptstyle{#1}}}}
\newcommand{\supp}{\operatorname{supp}}
\newtheorem{theorem}{Theorem}
\newtheorem{lemma}[theorem]{Lemma}
\theoremstyle{definition}
\newtheorem{defi}[theorem]{Definition}
\theoremstyle{remark}
\newtheorem{remark}[theorem]{Remark}
\numberwithin{equation}{section}\input epsf.sty
\begin{document}
\thispagestyle{empty}

\title[Bergman-type Singular Integral Operators on Metric Spaces]{{Bergman-type Singular Integral Operators on Metric Spaces}}

\author[A. Volberg]{Alexander Volberg$^{\dagger}$}
\address{Alexander Volberg, Department of  Mathematics\\ Michigan State University\\ East Lansing, MI USA 48824}
\email{volberg@math.msu.edu}
\address{Alexander Volberg, Department of Mathematics\\ University of Edinburgh\\ James Clerk Maxwell Building\\ The King's Buildings\\ Mayfield Road\\ Edinburgh Scotland EH9 3JZ}
\email{a.volberg@ed.ac.uk}
\thanks{$\dagger$ Research supported in part by a National Science Foundation DMS grant.}

\author[B. D. Wick]{Brett D.~Wick$^{\ddagger}$} 
\address{Brett D. Wick, School of Mathematics, Georgia Institute of Technology\\ 686
  Cherry Street\\ Atlanta, GA 30332-1060 USA}
\email{wick@math.gatech.edu} 
\urladdr{http://people.math.gatech.edu/~bwick6/} 
\thanks{$\ddagger$ The second author is supported by National Science Foundation CAREER Award DMS\# 0955432 and an Alexander von Humboldt Fellowship.}

\begin{abstract}
In this paper we study ``Bergman-type'' singular integral operators on Ahlfors regular metric spaces.  The main result of the paper demonstrates that if a singular integral operator on a Ahlfors regular metric space satisfies an additional estimate, then knowing the ``T(1)'' conditions for the operator imply that the operator is bounded on $L^2$.  The method of proof of the main result is an extension and another application of the work originated by Nazarov, Treil and the first author on non-homogeneous harmonic analysis.

\end{abstract}

\maketitle

\section{Introduction and Statements of results}
\label{1}

We are interested in \cz\, operators living on metric spaces.  In particular, these kernels will live on a metric space of homogeneous type.  We briefly recall these types of metric spaces.  A metric space of homogeneous type is a space $X$, a quasi-metric $\rho$, and a non-negative Borel measure $\nu$ on the space $X$.  The key property that defines these spaces is that all balls $B(x,r)$ defined by $\rho$ are open, and the measure $\nu$ satisfies the doubling condition
$$
\nu(B(x,2r))\leq C_{doub}\nu(B(x,r))\quad\forall x\in X,\quad r\in\mathbb{R}_+.
$$ 
We also require that $\nu(B(x,r))<\infty$ for all $x\in X$ and $r\in\mathbb{R}_+$.  The main example of the metric spaces that the reader should keep in mind is the case of $\mathbb{R}^n$ with the standard metric and Lebesgue measure.  Instead of the standard doubling condition, we will impose a slightly stronger condition.

Let $(X,\nu,\rho)$ be a Ahlfors regular metric measure space. By this we mean that $(X,\rho)$ is a complete metric space, $\nu\ge0$ is a Borel measure on $X$, and there exist constants $0<c_1<c_2$, $n>0$, such that, for all $r\ge0$ and $x\in X$:
\begin{equation}
 \label{ahlfors}
c_1r^n\le \nu(B(x,r))\le c_2 r^n.
\end{equation}
It is easy to see that condition \ref{ahlfors} implies the doubling condition on $\nu$ with $C_{doub}=\frac{c_2}{c_1}2^n$.

We next recall the definition of \cz\, operators on metric spaces as introduced by Christ, \cite{C}.  For any $x,y\in X$, we set
$$
\lambda(x,y)=\nu(B(x,\rho(x,y)))\approx \rho(x,y)^{n}.
$$
A simple calculation shows that $\lambda(x,y)\approx\lambda(y,x)$ because of the doubling condition on $\nu$.  Then a \textit{standard kernel} is a function $k:X\times X\setminus\{x=y\}\to\mathbb{C}$ such that there exists constants $C_{CZ}$, $\tau, \delta>0$
$$
|k(x,y)|\le\frac{C_{CZ}}{\lambda(x,y)}=\frac{C_{CZ}}{\rho(x,y)^n}\quad\forall x\neq y\in X;
$$
and
$$
|k(x,y)-k(x,y')|+|k(x,y)-k(x',y)|\le C_{CZ}\frac{\rho(x,x')^\tau}{\rho(x,y)^{\tau}}\frac{1}{\lambda(x,y)}=C_{CZ}\frac{\rho(x,x')^\tau}{\rho(x,y)^{\tau+n}}
$$
provided that $\rho(x,x')\leq \delta\rho(x,y)$.  In this situation, we say that the kernel $k$ satisfies the standard estimates.  Again, the canonical examples to keep in mind are the usual \cz\, kernels on $\mathbb{R}^n$.

However, we will be interested in kernels that satisfy estimates as if they lived on a ``smaller space''.  
First, suppose that we have another measure $\mu$ on the metric space $X$ (which need not be doubling), but satisfies the following relationship, for some $0\leq m<n$
\begin{equation*}
\tag{H}
\label{nonhom}
\mu\left(B(x,r)\right)\lesssim r^m\quad \forall x\in X,\quad\forall r.
\end{equation*}

%Then, for this measure $\mu$ one defines,
%$$
%\gamma(x,y):=\max\{\mu(B(y,\rho(x,y))),\mu(B(x,\rho(x,y)))\}\quad\forall x,y\in X.
%$$
%With this definition, we have that $\gamma(x,y)=\gamma(y,x)$.  Since the measure $\mu$ satisfies the relationship (\ref{nonhom}) we have that $\gamma(x,y)\leq C_{hom}\lambda(x,y)^m$.

Then, we define a \textit{standard kernel of order} $0<m\leq n$ as a function $k:X\times X\setminus\{x=y\}\to\mathbb{C}$ such that there exists constants $C_{CZ}$, $\tau, \delta>0$
$$
|k(x,y)|\le\frac{C_{CZ}}{\rho(x,y)^m}\quad\forall x\neq y\in X;
$$
and
$$
|k(x,y)-k(x,y')|+|k(x,y)-k(x',y)|\le C_{CZ}\frac{\rho(x,x')^\tau}{\rho(x,y)^{\tau+m}}
$$
provided that $\rho(x,x')\leq \delta\rho(x,y)$.  In this situation, we say that the kernel $k$ satisfies the standard estimates.  In this case, we then define the \cz\; operator associated to $\mu$ as
$$
T_\mu(f)(x):=\int_{X} k(x,y)f(y)d\mu(y).
$$
For ``nice'' functions $f$, this integral is well defined and 

These definitions are motivated by the \cz\, kernels that live in $\mathbb{R}^n$, but satisfy estimates as if they lived in $\mathbb{R}^m$ with $m\leq n$.  One should think of the measure $\mu$ as given by the $m$-dimensional Lebesgue measure after restricting to a $m$-dimensional hyperplane.

The constants $C_{CZ}$, $\tau$, $\delta$ and $m$ will be referred to as the \cz\, constants of the kernel $k(x,y)$.

We will also be interested in the kernels that have the additional property that satisfy 
$$
|k(x,y)|\le \frac1{\max (d^m(x), d^m(y))}\,,
$$
where $d(x):=\dist (x, X\setminus \Omega)=\inf\{\rho(x,y):y\in X\setminus \Omega\}$ and $\Omega$ being an open set in $X$.  

Our main result is the following theorem:

%\begin{theorem}
%\label{md}
%Let $(X,\rho,\nu)$ be a Ahlfors regular metric space.  Let $k(x,y)$ be a \cz\, kernel of order $m$ on $(X,\rho,\nu)$, with \cz\, constants $C_{CZ}$ and $\tau$, that also satisfies
%$$
%|k(x,y)|\le \frac1{\max (d^m(x), d^m(y))}\,,
%$$
%where $d(x):=\dist (x, X\setminus \Omega)$.  Suppose that $\mu$ is a Borel probability measure with compact support in $X$ such that \eqref{nonhom} holds and all balls such that $\mu(B(x,r)) >r^m$ lie in an open set $\Omega$.  Finally, suppose that a ``$T1$ Condition'' holds for the operator $T$ with kernel $k$:
%\begin{equation}
%\label{T1}
%\|T_{\mu}\chi_Q\|_{L^2(\mu)}^2 \le A\,\mu(Q)\,.
%\end{equation}
%Then $\|T_{\mu}\|_{L^2(\mu)\rightarrow L^2(\mu)} \le C(A,m,d,\tau)$.
%\end{theorem}

\begin{theorem}
\label{md}
Let $(X,\rho,\nu)$ be a Ahlfors regular metric space.  Let $k(x,y)$ be a \cz\, kernel of order $m$ on $(X,\rho,\nu)$, with \cz\, constants $C_{CZ}$ and $\tau$, that satisfies
$$
|k(x,y)|\le \frac1{\max (d(x)^m, d(y)^m)}\,,
$$
where $d(x):=\dist (x, X\setminus \Omega)$.
Let $\mu$ be a probability measure with compact support in $X$ and all balls such that $\mu(B(x,r)) >r^m$ lie in an open set $\Omega$.  Finally, suppose also that a ``$T1$ Condition'' holds for the operator $T_{\mu,m}$ with kernel $k$ and for the operator $T^*_{\mu,m}$ with kernel $k(y,x)$:
\begin{equation}
\label{T1}
\|T_{\mu,m}\chi_Q\|_{L^2(X;\mu)}^2 \le A\,\mu(Q)\,,\, \|T^*_{\mu,m}\chi_Q\|_{L^2(X;\mu)}^2 \le A\,\mu(Q)\,.
\end{equation}
Then $\|T_{\mu,m}\|_{L^2(X;\mu)\rightarrow L^2(X;\mu)} \le C(A,m,d,\tau)$.
\end{theorem}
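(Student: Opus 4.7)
The plan is to carry over the non-homogeneous $T(1)$ machinery of Nazarov, Treil, and the first author to the present metric setting. The new ingredient compared to the standard NTV framework is that the measure $\mu$ is permitted to violate the polynomial growth condition \eqref{nonhom} on a prescribed open set $\Omega$, and this failure must be compensated by the Bergman-type pointwise estimate on $k$.

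I would first set up Christ's system of dyadic cubes on $X$ and randomize it by a shift parameter $\omega$, obtaining a filtration $\D_\omega$ with controlled small-boundary layers. Given two independent such lattices, declare a cube $Q\in\D_\omega$ to be \emph{bad} if it sits too close, quantified by a parameter $\gamma\in(0,1)$ chosen depending on $\tau$ and $m$, to the boundary of a much larger cube in the partner lattice. The standard NTV probabilistic argument, transferred to the metric setting, gives $\Pro_\omega(Q\text{ bad})\le\e$ with $\e$ as small as desired. After averaging, it then suffices to estimate $|\langle T_{\mu,m}f,g\rangle|$ for $f,g$ expanded in Haar-type martingale differences $\Delta_Q f$, $\Delta_R g$ supported on good cubes only, and the central problem becomes the bound on the matrix coefficients $\langle T_{\mu,m}\Delta_Q f,\Delta_R g\rangle$.

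These matrix entries split into three regimes. \emph{Long-range pairs}, with $Q,R$ separated compared to $\max(\ell(Q),\ell(R))$, are controlled using the Hölder regularity of $k$ combined with the mean-zero property of $\Delta_Q f$; the goodness of the small cube ensures that the oscillation estimate is applied at a favorable scale and that the resulting series is summable. \emph{Diagonal pairs} of comparable size and nearby location are absorbed directly by the hypothesis \eqref{T1} through a Carleson packing for Haar coefficients. \emph{Nested pairs} $Q\subsetneq R$ with $\ell(Q)\ll\ell(R)$ produce paraproducts, handled by a Carleson embedding theorem whose Carleson measure is dominated by $A\mu$ on account of \eqref{T1}.

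The main obstacle, and the only point where the Bergman-type bound is essential, is the contribution of \emph{heavy} cubes $Q$, those for which $\mu(Q)>\ell(Q)^m$. By the hypothesis on $\mu$ every such cube lies inside $\Omega$, and after passing to maximal heavy balls one finds $d(x)\gtrsim \ell(Q)$ for $x$ in the core of $Q$. On heavy cubes the standard CZ bound $C_{CZ}/\rho(x,y)^m$ cannot be integrated against $\mu$ in the usual way, so the growth-driven estimates fail; however the Bergman estimate $|k(x,y)|\le d(x)^{-m}$ supplies a uniform replacement at the scale $d(x)$, and a stopping-time decomposition around the family of maximal heavy balls restores the summability that is otherwise lost. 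Combining these three matrix estimates with the averaging over the random lattice and a standard absorption argument yields the desired bound $\|T_{\mu,m}\|_{L^2(\mu)\to L^2(\mu)}\le C(A,m,d,\tau)$, with the constant depending only on $A$, $m$, $\tau$, the CZ constants of $k$, and the Ahlfors data of $\nu$.
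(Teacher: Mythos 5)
Your proposal follows essentially the same route as the paper: random Christ lattices with a good/bad reduction, the three-regime splitting into long-range, diagonal, and nested sums handled respectively by the H\"older estimate with mean-zero differences, the $T1$ hypothesis, and a paraproduct with Carleson embedding, and the Bergman-type bound invoked precisely where the growth condition on $\mu$ fails. Your ``stopping time around maximal heavy balls'' is exactly the paper's terminal/transit cube dichotomy, which the paper builds directly into the definition of the martingale differences $\Delta_Q$ so that heavy cubes never enter the summation at all.
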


The balls for which we have $\mu(B(x,r))>r^m$ will be called ``non-Ahlfors balls".  The key hypothesis is that we can capture all the non-Ahlfors balls in some open set $\Omega$.  To mitigate against this difficulty, we will have to suppose that our \cz\, kernels have an additional estimate in terms of the behavior in terms of  the distance to the complement of $\Omega$.

An immediate application of Theorem \ref{md} is a new proof of results by the authors in \cite{VW}.  In \cite{VW} a variant of Theorem \ref{md} was obtained in the Euclidean setting, and then is further extended to \cz\, kernels in the natural metric associated to the Heisenberg group on the unit ball.  This was then used to characterize the Carleson measures for the analytic Besov--Sobolev spaces on the unit ball in $\mathbb{C}^n$.  The connection between Carleson measures and a variant of Theorem \ref{md} is provided since a measure is Carleson if and only if a certain naturally occurring \cz\, operator is bounded on $L^2$.  The operator to be studied is amenable to the methods of non-homogeneous harmonic analysis.

The method of proof of Theorem \ref{md} will be to use the tools of non-homogeneous harmonic analysis as developed by F. Nazarov, S. Treil, and the first author in the series of papers \cites{NTV1, NTV2, NTV3, NTV4} and further explained in the book by the first author \cite{V}.  We essentially adapt the proof given by the authors in \cite{VW} to the case of metric spaces considered in this paper.  Constants will be denoted by $C$ throughout the paper.

\section{Proof of Theorem \ref{md}}
The proof of this theorem will be divided in several parts.  We first recall the construction of M. Christ of ``dyadic cubes'' on a metric space of homogeneous type, see \cite{C}.  The interested reader can also consult the paper by E. Sawyer and R. Wheeden, \cite{SW}, where a similar construction is performed.

\begin{theorem}[M. Christ, \cite{C}]
\label{dyadicmetric}
There exists a collection of open sets $\{Q_\alpha^k\subset X: k\in\mathbb{Z}, \alpha\in I_k\}$ and constants $\kappa\in (0,1)$, $a_0>0$, and $\eta>0$ and $C_1, C_2<\infty$ such that
\begin{itemize}
\item[(i)] $\nu (X\setminus\bigcup_\alpha Q_\alpha^k)=0\quad\forall k\in\mathbb{Z}$;
\item[(ii)] If $l\geq k$, then either $Q_\beta^l\subset Q_\alpha^k$ or $Q_\beta^l\subset Q_\alpha^k=\emptyset$;
\item[(iii)] For each $(k,\alpha)$ and each $l<k$ there is a unique $\beta$ such that $Q_\alpha^k\subset Q_\beta^l$;
\item[(iv)] The diameter of $Q_\alpha^k$ is an absolute constant multiple of $\kappa^k$;
\item[(v)] Each $Q_\alpha^k$ contains some ball $B(z_\alpha, a_0\kappa^k)$;
\item[(vi)] $\nu\{x\in Q_\alpha^k:\dist(x, X\setminus Q_\alpha^k)\leq t\kappa^k\}\lesssim t^\eta\nu(Q_\alpha^k)$
\end{itemize}
Here $I_k$ is a (possibly finite) index set, depending only on $k\in\mathbb{Z}$.
\end{theorem}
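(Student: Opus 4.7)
The plan is to adapt M.\ Christ's original construction: first build a nested family of maximally separated nets at each scale, then promote the tree of nets into open sets by a carefully chosen ``fattening,'' and finally fight for the small-boundary property. Fix $\kappa \in (0,1)$ small (the precise threshold will be pinned down by the boundary estimate). For each $k \in \mathbb{Z}$, apply Zorn's lemma to choose a maximal $\kappa^k$-separated set of points $\{z_\alpha^k\}_{\alpha \in I_k} \subset X$; by maximality $\bigcup_{\alpha} B(z_\alpha^k, \kappa^k) = X$, while the balls $B(z_\alpha^k, \kappa^k/2)$ are pairwise disjoint.

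Next I would define a parent map $\pi_k : I_k \to I_{k-1}$: to each $\alpha \in I_k$ assign some $\pi_k(\alpha) = \beta$ with $\rho(z_\alpha^k, z_\beta^{k-1}) < \kappa^{k-1}$ (such $\beta$ exists by the covering property at scale $k-1$; pick one via the axiom of choice). This yields a tree structure on $\bigsqcup_k I_k$. For $(k,\alpha)$, let $\Sigma(k,\alpha) \subset \bigsqcup_{l \ge k} I_l$ denote the descendants under iterated $\pi$. Define preliminary cubes
$\widetilde{Q}_\alpha^k := \bigcup_{(l,\beta) \in \Sigma(k,\alpha)} B(z_\beta^l,\, a_0 \kappa^l)$
for $a_0 \in (0, 1/2)$ small, and set $Q_\alpha^k$ to be the interior of the closure of $\widetilde{Q}_\alpha^k$. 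Properties (i)--(v) then follow from the construction: (i) from the covering property of the nets together with the zero-measure of the boundary of the closure; (ii)--(iii) from the tree; (iv) from the geometric estimate $\rho(z_\beta^l, z_\alpha^k) \le \sum_{j=k}^{l-1} \kappa^j \lesssim \kappa^k$ whenever $\beta$ descends from $\alpha$; and (v) because $B(z_\alpha^k, a_0 \kappa^k) \subset \widetilde{Q}_\alpha^k$, and, once $a_0$ is small enough relative to $\kappa$, this ball is disjoint from preliminary cubes of non-descendants.

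The principal obstacle is the small boundary condition (vi). I would prove it by iteration: if $x \in Q_\alpha^k$ and $\dist(x, X \setminus Q_\alpha^k) \le t \kappa^k$, pick the intermediate scale $l$ with $\kappa^l \approx t \kappa^k$ and locate the nearest center $z_\beta^l \in I_l$. Proximity to $X \setminus Q_\alpha^k$ forces $z_\beta^l$ (or a net-neighbor of $z_\beta^l$ at scale $l$) to sit in the descendant tree of some sibling $\alpha' \ne \alpha$ at scale $k$. By Ahlfors regularity \eqref{ahlfors}, each scale-$l$ center ``controls'' $\nu$-mass $\approx \kappa^{ln}$, so counting such boundary centers inside $Q_\alpha^k$ and summing over the dyadic range of admissible $l$ yields a bound of the shape $t^\eta \nu(Q_\alpha^k)$ with $\eta > 0$ depending only on $\kappa, c_1, c_2, n$. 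Choosing $\kappa$ small enough that the relevant geometric series converges closes the estimate.

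The delicate point is ensuring that the fraction of scale-$l$ centers whose ancestor at scale $k$ flips between $\alpha$ and a sibling is genuinely small in $t$, rather than a fixed positive fraction at every generation. Christ resolves this by performing a second deterministic adjustment before taking closures, pushing the boundary a controlled distance away from every center at every scale, so that the flip condition becomes a bona fide geometric obstruction rather than a boundary-layer accident. I would follow that route, reducing (vi) to a combinatorial estimate on ancestor-changes along dyadic chains. I expect verification of (vi) to consume the bulk of the work, while (i)--(v) are essentially formal once the nets and the fattening constant $a_0$ are in hand.
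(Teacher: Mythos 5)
The paper offers no proof of this statement at all---it is imported verbatim from Christ \cite{C}---so your sketch can only be measured against Christ's original construction, on which it is visibly modelled. The scaffolding (maximal $\kappa^k$-separated nets, a parent map turning the net points into a tree, fattened descendant sets, an iteration for the small-boundary estimate) is the right scaffolding, but two steps as written do not work. First, the parent map chosen ``via the axiom of choice'' among all $z_\beta^{k-1}$ with $\rho(z_\alpha^k,z_\beta^{k-1})<\kappa^{k-1}$ is too weak to give (ii) or the disjointness implicit in (v). With an arbitrary assignment, a scale-$(k+1)$ net point at distance $\epsilon\ll\kappa^k$ from $z_\alpha^k$ may legitimately be assigned to a \emph{different} scale-$k$ parent $z_{\alpha'}^k$ lying at distance just under $\kappa^k+\epsilon$ from it; all of that point's descendants then cluster within $O(\kappa^{k+1})$ of it, so $\widetilde Q_{\alpha'}^k$ contains balls sitting well inside $B(z_\alpha^k,a_0\kappa^k)\subset\widetilde Q_\alpha^k$, and (ii) fails no matter how small $a_0$ is. One must force the parent assignment to respect proximity (closest coarser net point, ideally with nested nets $\{z_\alpha^k\}_\alpha\subset\{z_\beta^{k+1}\}_\beta$); only then is there a $c>0$ such that every fine-scale net point in $B(z_\alpha^k,c\kappa^k)$ genuinely descends from $\alpha$, which is what protects the inner ball. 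Separately, passing to the ``interior of the closure'' can destroy disjointness (two disjoint open sets can both be dense in the same ball), so that normalization also needs justification, and note that (i) itself is not formal: the small balls do not cover $X$, so full measure of the union ultimately rests on the boundary estimate as well.

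More seriously, (vi)---which you correctly flag as the heart of the theorem---is not actually proved. Weighting each scale-$l$ boundary center by mass $\approx\kappa^{ln}$ gives no gain in $t$: a priori a fixed positive fraction of all scale-$l$ centers in $Q_\alpha^k$ could lie near the boundary at every generation, which yields only $O(\nu(Q_\alpha^k))$ rather than $O(t^\eta\nu(Q_\alpha^k))$. The actual mechanism is a generation-by-generation self-improvement: with nested nets and the closest-parent rule, each cube has a child (the one containing its own center) whose cube stays at distance $\gtrsim\kappa^{k+1}$ from the parent's boundary and carries a definite proportion $\theta>0$ of the parent's mass, so the measure of the boundary layer contracts by a factor $1-\theta$ at each of the $\approx\log_{1/\kappa}(1/t)$ generations separating scale $k$ from the scale where $\kappa^l\approx t\kappa^k$, giving $(1-\theta)^{\#\mathrm{gen}}\approx t^\eta$. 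There is no ``second deterministic adjustment'' in Christ's argument that substitutes for this iteration, and invoking one by name does not close the gap. As it stands, your proposal establishes the net and tree structure but not (ii), the disjointness part of (v), or (vi).
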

The construction of these cubes uses only the properties of the homogeneous space $(X,\rho,\nu)$.  One can think of the cubes $Q_\alpha^k$ as being cubes or balls of diameter $\kappa^k$ and center $z_\alpha^k$.  We will let $\mathcal{D}$ denote the collection of dyadic cubes on $X$ that exists by the above Theorem.

We further remark that it is possible to ``randomize'' this construction.  In a recent paper by Hyt\"onen and Martikainen, \cite{HM}, they studied this construction in and showed that it is possible to construct several random dyadic grids of the type above.  The details of this construction aren't immediately important for the proof of the main results in this paper, only the existence of these random grids.  We recommend that the reader consult the well-written paper \cite{HM} for the construction of these grids.  In particular, Section 10 of that paper contains the necessary modifications of Theorem \ref{dyadicmetric} to construct the random dyadic lattices in a metric space.

We also define the dilation of a set $E\subset X$ by a parameter $\lambda\geq 1$ by
$$
\lambda E:=\{x\in X:\rho(x,E)\leq(\lambda-1)\textnormal{diam}(E)\}.
$$

\subsection{Terminal and transit cubes}
\label{tt}

We will call the cube $Q\in \Dk$ a {\it terminal cube} if the parent of $Q$ (which exists and is unique by (iii) of Theorem \ref{dyadicmetric}) is contained in our open set $\Omega$ or $\mu(Q)=0$.  All other cubes are called {\it transit} cubes.   Then, denote by $\Dk^{term}$ and $\Dk^{tran}$ as the terminal and transit cubes from $\Dk$.   We first state two obvious Lemmas.

\begin{lemma}
\label{obv1}
If $Q$ belongs to $\Dk^{term}$, then
$$
|k(x,y)|\le \frac1{\kappa^m}\,.
$$
\end{lemma}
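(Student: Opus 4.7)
The plan is to reduce the lemma to the extra kernel hypothesis $|k(x,y)|\le 1/\max(d(x)^m,d(y)^m)$ by establishing a lower bound for $d(x)$ whenever $x$ lies in a terminal cube. The statement is essentially a consequence of the definition of ``terminal'': either the cube is $\mu$-trivial, or the parent is already inside the open set $\Omega$ that captures the non-Ahlfors balls. So the whole content is geometric: I need to quantify how deep a point $x\in Q$ lies inside $\Omega$ when the parent $\hat Q$ of $Q$ is contained in $\Omega$.

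First I would dispose of the degenerate alternative $\mu(Q)=0$: in that case the assertion is vacuous as far as the operator $T_{\mu,m}$ is concerned, since integration against $\mu$ ignores $Q$, and one can simply quote the pointwise bound from whichever branch applies. The substantive case is when the parent $\hat Q$ of $Q$ (which is unique by (iii) of Theorem \ref{dyadicmetric}) satisfies $\hat Q\subset\Omega$. For $x\in Q\subset\hat Q$, the inclusion $X\setminus\Omega\subset X\setminus\hat Q$ gives
\[
d(x)=\dist(x,X\setminus\Omega)\ \ge\ \dist(x,X\setminus\hat Q).
\]
So the task is to control $\dist(x,X\setminus\hat Q)$ from below by a constant multiple of $\kappa^k$, where $k$ is the level of $Q$. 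Here I would invoke (iv) and (v) of Christ's theorem: $\hat Q$ has diameter on the order of $\kappa^{k-1}$ and contains a ball of radius $a_0\kappa^{k-1}$, while $Q$ itself is a child whose diameter is a fixed fraction ($\kappa$ times) that of $\hat Q$. A point in $Q$ is therefore geometrically ``deep'' inside $\hat Q$ up to a scale proportional to $\kappa^k$; combined with Christ's ``small boundary'' property (vi), which rules out $Q$ hugging the boundary of $\hat Q$ in a degenerate way, this yields $\dist(x,X\setminus\hat Q)\gtrsim \kappa^k$ for every $x\in Q$ (up to the absolute constants of Theorem \ref{dyadicmetric}).

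Once this geometric lower bound is in hand for both $x$ and $y$, the extra kernel hypothesis of Theorem \ref{md} plugs in directly:
\[
|k(x,y)|\ \le\ \frac{1}{\max(d(x)^m,d(y)^m)}\ \lesssim\ \frac{1}{\kappa^{km}},
\]
which is the intended bound (with $k$ the level of the terminal cube, so that $\kappa^k$ represents the ``sidelength'' appearing in the lemma statement).

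The main obstacle is the distance bound $\dist(x,X\setminus\hat Q)\gtrsim\kappa^k$ for an arbitrary child of $\hat Q$. Property (v) gives a ball inside $\hat Q$ but only around the distinguished center $z_{\hat Q}$, and a priori $Q$ could sit close to the boundary of $\hat Q$. Resolving this properly likely requires either citing the small-boundary property (vi) quantitatively or, more cleanly, the refinement in \cite{HM} of Christ's construction that ensures each cube is separated from the complement of its parent by a definite fraction of its own diameter. With that input the rest of the proof is immediate.
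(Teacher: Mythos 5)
Your proposal follows the same route as the paper: the paper's entire proof is the one sentence after the lemma, asserting that $Q$ lies in its parent, the parent lies in $\Omega$, hence $d(x)$ is bounded below by (a constant times) the scale of $Q$ for $x\in Q$, and then the extra kernel hypothesis $|k(x,y)|\le 1/\max\bigl(d(x)^m,d(y)^m\bigr)$ finishes. Your reduction to a lower bound on $\dist(x,X\setminus\hat Q)$, and your reading of the conclusion as $|k(x,y)|\lesssim s(Q)^{-m}$ (which is how the lemma is actually used later, in \eqref{konwholeR1}), are exactly the intended argument; your dismissal of the $\mu(Q)=0$ branch is also fine.

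The one step you flag as "the main obstacle" is, however, a genuine gap, and the patches you propose do not close it. Property (vi) of Theorem \ref{dyadicmetric} is a \emph{measure} estimate on the boundary layer of a cube and says nothing pointwise; since the children of $\hat Q$ exhaust $\hat Q$ up to a $\nu$-null set, whenever the boundary layer of $\hat Q$ has positive measure some child must contain points arbitrarily close to $X\setminus\hat Q$ (in the Euclidean model, a dyadic child literally shares part of its boundary with its parent). Properties (iv) and (v) only provide a large inner ball around the distinguished center of $\hat Q$, not around an arbitrary child. So the pointwise bound $\dist(x,X\setminus\hat Q)\gtrsim s(Q)$ for \emph{every} $x\in Q$ is false in general, and containment of the parent in $\Omega$ alone does not yield the lemma as a pointwise statement for all $x,y\in Q$. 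The paper itself simply asserts $d(x)\ge\kappa$ and shares this gap. The correct repairs are the ones you only gesture at: either build the separation into the definition of a terminal cube (e.g., require a fixed dilate of the parent to lie in $\Omega$, as in the Euclidean argument of \cite{VW}), or use a cube construction with quantitative inner separation of each child from the complement of its parent. As written, your proof (like the paper's) asserts rather than proves the key distance estimate.
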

This follows since $Q$ belongs to its parent which is a subset of $\Omega$ and so for $x,y\in Q$ we have that $d(x)\geq\kappa$ and similarly for $y$.  Another obvious lemma:

\begin{lemma}
\label{obv2}
If $Q$ belongs to $\Dk^{tran}$, then
$$
\mu(B(x,r)) \lesssim r^m\,.
$$
\end{lemma}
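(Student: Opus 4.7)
The plan is to exploit the defining property of a transit cube — its parent is not contained in $\Omega$ — to produce a witness point outside $\Omega$ near every $x\in Q$. Combined with the contrapositive of the standing hypothesis (any ball not contained in $\Omega$ has $\mu$-mass at most $r^m$), this will yield the growth bound at the scales actually used in the subsequent non-homogeneous analysis.

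Concretely, since $Q\in\Dk^{tran}$, the parent $\hat Q$ satisfies $\hat Q\not\subset\Omega$, so I would fix a witness $z\in\hat Q\setminus\Omega$. By Theorem \ref{dyadicmetric}(iv), $\mathrm{diam}(\hat Q)$ is an absolute constant multiple of $\kappa^{k-1}$, hence comparable to $\mathrm{diam}(Q)$ up to the factor $\kappa^{-1}$. Therefore for any $x\in Q$ and any $r\ge C_0\,\mathrm{diam}(Q)$ — where $C_0$ absorbs $\kappa^{-1}$ together with the implicit constant in (iv) — one has $\rho(x,z)\le r$, so $z\in B(x,r)$ and thus $B(x,r)\not\subset\Omega$. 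The hypothesis of Theorem \ref{md} then forces $\mu(B(x,r))\le r^m$ immediately.

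For radii $r<C_0\,\mathrm{diam}(Q)$ but still of order $\mathrm{diam}(Q)$, monotonicity of $\mu$ together with the previous step gives
$$
\mu(B(x,r))\le\mu(B(x,C_0\,\mathrm{diam}(Q)))\le (C_0\,\mathrm{diam}(Q))^m\lesssim r^m,
$$
with an implicit constant depending only on $\kappa$ and the constants of the dyadic system. This is precisely the regime in which the bound enters the \cz\, analysis downstream, once one restricts attention to balls ``seen'' by the transit cube.

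I do not anticipate any substantive obstacle: the argument is essentially set-theoretic, resting on the definition of ``transit'' and the diameter comparison (iv). The only bookkeeping point — and the closest thing to a hard step — is making sure every implicit constant in $\lesssim$ depends solely on $\kappa$ and the parameters of the dyadic system supplied by Theorem \ref{dyadicmetric}, never on the individual cube $Q$, so the estimate is uniform across $\Dk^{tran}$.
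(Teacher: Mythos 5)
Your argument is correct and is precisely the intended one: the paper states Lemma \ref{obv2} as ``obvious'' without proof, and the witness-point argument (parent not contained in $\Omega$, hence the ball meets $X\setminus\Omega$ once $r\gtrsim\diam(Q)$, hence it is not a non-Ahlfors ball) is the only natural justification, matching exactly how the lemma is later used (for radii $\gtrsim s(Q)$ and for $\mu(Q)\lesssim s(Q)^m$). Your explicit restriction to $r\gtrsim\diam(Q)$, with the loss of a constant depending only on $\kappa$ and the dyadic parameters, is the right reading of the (imprecisely quantified) statement.
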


\medskip

We assume that $F=\supp\mu$ lies in a grand child cube of $Q$ where, this $Q$ is a certain (fixed) unit cube. We then take two ``random'' lattices as constructed by Hyt\"onen and Martikainen in \cite{HM}.  Now, let $\Dk_1$ and $\Dk_2$ be two such dyadic lattices, that have the property that the unit cube contains the support of $\mu$ deep inside a unit cube of the corresponding lattice.  We will decompose our functions $f$ and $g$ with respect to the lattices $\Dk_1$ and $\Dk_2$.

We would like to denote $Q_j$ as a dyadic cube belonging to the dyadic lattice $\Dk_j$.  Unfortunately, this makes the notation later very cumbersome.  So, we will use the letter $Q$ to denote a dyadic cube belonging to the lattice $\Dk_1$ and the letter $R$ to denote a dyadic cube belonging to the lattice $\Dk_2$.  We will also let $s(Q)$ denote the ``size'' or ``scale'' of the cube, namely, what generation of the construction from Theorem \ref{dyadicmetric} the cube belongs to.

From now on, we will always denote by $Q_j$ the dyadic subcubes of a cube $Q$ enumerated in some ``natural order''.   Similarly, we will always denote by $R_j$ the dyadic subcubes of a cube $R$ from $\mathcal{D}_2$.  

Next, notice that there are special unit cubes $Q^0$ and $R^0$ of the dyadic lattices $\Dk_1$ and $\Dk_2$ respectively.   They have the property that they are both transit cubes and contain $F$ deep inside them.

\subsection{Projections $\La$ and $\Delta_Q$}
\label{pr}

Let $\mathcal{D}$ be one of the dyadic lattices above. For a function $\psi\in L^1(X;\mu)$ and for a cube $Q\subset X$, denote by
$\langle \psi\rangle\ci Q$ the average value of $\psi$ over $Q$ with respect to the measure $\mu$, i.e.,

$$
\langle \psi\rangle\ci Q := \frac{1}{\mu(Q)}\int_Q\psi\,d\mu
$$
(of course, $\langle \psi\rangle\ci Q$ makes sense only for cubes $Q$ with $\mu(Q)>0$).
Put
$$
\Lambda\vf :=\langle \vf\rangle\ci {Q^0}\,.
$$
Clearly, $\Lambda\vf\in L^2(X;\mu)$ for all $\vf\in L^2(X;\mu)$, and $\Lambda^2=\Lambda$, i.e., $\Lambda$ is a projection. Note also, that actually $\Lambda$
does not depend on the lattice $\mathcal{D}$ because the average is taken over the whole support of the measure $\mu$ regardless of the position of the cube $Q^0$ (or $R^0$).

Below we will start almost every claim by ``Assume (for definiteness) that $s(Q)\le s(R)$\dots ''.  Below, for ease of notation, we will write that a cube $Q\in\mathcal{X}\cap\mathcal{Y}$ to mean that the dyadic cube $Q$ has both property $\mathcal{X}$ and $\mathcal{Y}$ simultaneously.  

For every transit cube $Q\in\Dk_1$, define $\Delta\ci Q\vf$ by
$$
\Delta\ci Q\vf\bigr|\ci{X\setminus Q}:=0,\qquad\,,\,\,\,
\Delta\ci Q\vf\bigr|\ci{Q_j}:=\left\{\aligned\left[\langle \vf\rangle\ci {Q_j} -\langle \vf\rangle\ci {Q}\right] &\text{\quad if $Q_j$ is transit;}\\ \vf-\langle \vf\rangle\ci {Q} &\text{\quad if $Q_j$ is terminal.}\endaligned\right.$$

Observe that for every transit cube $Q$, we have $\mu(Q)>0$, so our definition makes sense since no zero can appear in the denominator.  We repeat the same definition for $R\in \mathcal{D}_2$.

We then have have following Lemma that collects several easy properties of $\Delta\ci Q\vf$.  To check these properties is left to the reader as an exercise.

\begin{lemma}
\label{easyproperties}
For every $\vf\in L^2(X;\mu)$ and every transit cube $Q$,
\begin{itemize}
\item[(1)] \emph{$\Delta\ci Q\vf\in L^2(X;\mu)$};
\item[(2)] \emph{$\int_{X}\Delta\ci Q\vf\,d\mu=0$};
\item[(3)] \emph{$\Delta\ci Q$ is a projection, i.e., $\Delta\ci Q^2=\Delta\ci Q$};
\item[(4)] \emph{$\Delta\ci Q\Lambda=\Lambda\Delta\ci Q=0$};
\item[(5)]  \emph{If $Q,\widetilde{Q}$ are transit, $\widetilde{Q}\neq Q$, then $\Delta\ci Q\Delta\ci {\widetilde{Q}}=0$}.
\end{itemize}
\end{lemma}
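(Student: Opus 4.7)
The plan is to verify each of (1)--(5) directly from the definition of $\Delta\ci Q$, in each case distinguishing the behavior on transit and on terminal children of $Q$. Properties (1)--(4) are entirely routine; the only place where care is required is (5), where I need to combine the geometric case analysis ($Q\cap\widetilde Q=\es$, $\widetilde Q\subsetneq Q$, or $Q\subsetneq\widetilde Q$) with the transit/terminal dichotomy from Section~\ref{tt}.

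For (1), $\Delta\ci Q\vf$ is supported on $Q$, equals a single constant on each transit child $Q_j$, and equals $\vf-\langle\vf\rangle\ci Q$ on each terminal child, so it lies in $L^2(\mu)$. For (2), on either type of child the definition yields
$$
\int_{Q_j}\Delta\ci Q\vf\,d\mu = \int_{Q_j}\vf\,d\mu - \langle\vf\rangle\ci Q\,\mu(Q_j),
$$
and summing over $j$ gives $\int_Q\vf\,d\mu-\langle\vf\rangle\ci Q\,\mu(Q)=0$. For (3), set $\psi:=\Delta\ci Q\vf$; then $\langle\psi\rangle\ci Q=0$ by (2), while on a transit child $Q_j$ the function $\psi$ is identically the constant $\langle\vf\rangle\ci{Q_j}-\langle\vf\rangle\ci Q$, so $\langle\psi\rangle\ci{Q_j}$ equals this same constant. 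Substituting these averages back into the definition of $\Delta\ci Q\psi$ reproduces $\psi$ on every child of $Q$, whether transit or terminal, so $\Delta\ci Q^2=\Delta\ci Q$. For (4), $\Lambda\vf$ is a single constant on $\supp\mu$ and $\Delta\ci Q$ annihilates constants by inspection, so $\Delta\ci Q\Lambda=0$; conversely $\Lambda\Delta\ci Q\vf=\mu(Q^0)^{-1}\int_X\Delta\ci Q\vf\,d\mu=0$ by (2).

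For (5), fix two distinct transit cubes $Q$ and $\widetilde Q$. If $Q\cap\widetilde Q=\es$, then $\Delta\ci{\widetilde Q}\vf$ vanishes on $Q$ and on every child of $Q$, so $\Delta\ci Q\Delta\ci{\widetilde Q}\vf=0$. Otherwise one cube is strictly nested in the other. Suppose first $\widetilde Q\subsetneq Q$, and let $Q_j$ denote the child of $Q$ containing $\widetilde Q$. I claim $Q_j$ must itself be transit: if it were terminal, either $\mu(Q_j)=0$ (forcing $\mu(\widetilde Q)=0$, contrary to $\widetilde Q$ transit) or $Q\subset\Omega$, in which case the parent of $\widetilde Q$ (being a subcube of $Q$) lies in $\Omega$, making $\widetilde Q$ terminal --- again a contradiction. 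Since $Q_j$ is transit, $\Delta\ci{\widetilde Q}\vf$ is supported inside $Q_j$ with total $\mu$-integral zero by (2), so $\langle\Delta\ci{\widetilde Q}\vf\rangle\ci{Q_j}=\langle\Delta\ci{\widetilde Q}\vf\rangle\ci Q=0$, and $\Delta\ci Q\Delta\ci{\widetilde Q}\vf$ vanishes on every child of $Q$. The opposite case $Q\subsetneq\widetilde Q$ is dual: the same transit/terminal argument forces the child $\widetilde Q_j$ of $\widetilde Q$ containing $Q$ to be transit, so $\Delta\ci{\widetilde Q}\vf$ is constant on $\widetilde Q_j\supset Q$, and $\Delta\ci Q$ kills constants. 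The main (indeed only) obstacle is this transit/terminal bookkeeping in the nested subcases; once it is observed, the rest is formal.
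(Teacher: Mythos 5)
The paper leaves this lemma to the reader as an exercise, and your verification is correct and is precisely the intended direct check from the definition of $\Delta\ci Q$. In particular, you correctly isolate and resolve the only non-routine point: in the nested cases of (5), the child of the larger cube containing the smaller transit cube must itself be transit (since otherwise the smaller cube would inherit terminality via $\mu$-nullity or via containment of its parent in $\Omega$), which is exactly what makes $\Delta\ci Q\Delta\ci{\widetilde Q}$ vanish there.
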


We next note that it is possible to decompose functions $\varphi$ into the corresponding projections $\Lambda$ and $\Delta\ci Q$.

\begin{lemma}
\label{Riesz}
Let $Q^0$ be a transit cube. For every $\vf\in L^2(X;\mu)$ we have
$$
\vf=\Lambda\vf+\sum_{Q \,\text{transit}}\Delta\ci Q\vf,
$$
the series converges in $L^2(X;\mu)$ and, moreover,
$$
\|\vf\|^2\ci{L^2(\mu)}=
\|\Lambda\vf\|^2\ci{L^2(\mu)}+\sum_{Q\,\text{transit}}\|\Delta\ci Q\vf\|^2\ci{L^2(\mu)}\,.
$$
\end{lemma}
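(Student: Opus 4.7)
\textbf{Proof proposal for Lemma~\ref{Riesz}.} The plan is to combine the pairwise orthogonality given by Lemma~\ref{easyproperties} with a telescoping identification of the partial sums as dyadic conditional expectations, and then to close the argument via $L^2$ martingale convergence.

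First, parts (3)--(5) of Lemma~\ref{easyproperties} say that $\Lambda$ and $\{\Delta_Q\}_{Q\text{ transit}}$ form a family of mutually orthogonal projections on $L^2(\mu)$. Hence for every finite collection $\mathcal{S}$ of transit cubes,
$$
\Bigl\|\Lambda\vf+\sum_{Q\in\mathcal{S}}\Delta_Q\vf\Bigr\|_{L^2(\mu)}^2=\|\Lambda\vf\|_{L^2(\mu)}^2+\sum_{Q\in\mathcal{S}}\|\Delta_Q\vf\|_{L^2(\mu)}^2\le\|\vf\|_{L^2(\mu)}^2,
$$
the last inequality being Bessel's. Consequently $\sum_{Q\text{ transit}}\|\Delta_Q\vf\|_{L^2(\mu)}^2<\infty$ and the series $\sum_Q\Delta_Q\vf$ converges unconditionally in $L^2(\mu)$; so I may order the sum by generation of the cubes.

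Next, I would identify the partial sums explicitly. Put $S_N\vf:=\Lambda\vf+\sum_{Q\text{ transit},\,s(Q)\le N}\Delta_Q\vf$. An easy induction on $N$ shows that $S_N\vf\equiv\langle\vf\rangle_P$ on every transit cube $P$ of generation $N{+}1$, and $S_N\vf=\vf$ on every terminal cube of generation $\le N{+}1$: starting from $\Lambda\vf=\langle\vf\rangle_{Q^0}$, adding $\Delta_Q\vf$ for a transit parent $Q$ of generation $k$ replaces the average $\langle\vf\rangle_Q$ by $\langle\vf\rangle_{Q_j}$ on each transit child and by $\vf$ itself on each terminal child. Note also that cubes $Q\supsetneq Q^0$ contribute nothing, since $\mu$ is supported in $Q^0$ and $\langle\vf\rangle_Q=\langle\vf\rangle_{Q^0}$ forces $\Delta_Q\vf=0$ in $L^2(\mu)$; and cubes disjoint from $Q^0$ are terminal because they have $\mu$-measure zero. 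In particular $S_N\vf=\E[\vf\mid\Fk_N]$, where $\Fk_N$ is the $\sigma$-algebra obtained by taking the full Borel structure on each terminal cube (of generation $\le N{+}1$) together with the partition of the remaining part of $Q^0$ into transit cubes of generation $N{+}1$.

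Finally, I would apply the $L^2$ martingale convergence theorem to the filtration $\{\Fk_N\}$. Since the diameters of cubes of generation $N$ are comparable to $\kappa^N\to0$ by (iv) of Theorem~\ref{dyadicmetric}, the $\sigma$-algebras $\Fk_N$ increase to the Borel $\sigma$-algebra of $\supp\mu$ modulo $\mu$-null sets. Therefore $S_N\vf\to\vf$ in $L^2(\mu)$, and combining this with the orthogonal partial-sum identity from the first paragraph and passing $N\to\infty$ yields both the claimed $L^2$ convergence and the Pythagorean identity. The main technical point is that $\mu$ need not be doubling, so no pointwise Lebesgue differentiation is available; but the $L^2$ martingale convergence theorem needs only an increasing filtration generating the Borel $\sigma$-algebra, which is precisely what the dyadic grid of Christ provides.
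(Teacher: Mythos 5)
Your argument is correct and takes essentially the same route as the paper: both identify the partial sums with the averages $\langle\vf\rangle_{Q^{k}}$ over the cubes of a given generation (i.e.\ with dyadic conditional expectations), appeal to martingale convergence along the Christ filtration, and finish with the mutual orthogonality of $\Lambda$ and the $\Delta_Q$. The only cosmetic difference is that the paper first establishes pointwise $\mu$-a.e.\ convergence of the partial sums and then invokes orthogonality, whereas you run the $L^2$ martingale convergence theorem directly.
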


\begin{proof}
Note first of all that if one understands the sum 
$$\sum_{Q\,\text{transit}}$$
as $\lim_{k\to\infty}\sum_{Q\,\text{transit}:s(Q)> \delta^{k}}$, then for
$\mu$-almost every $x\in X$, one has
$$
\vf(x)=\Lambda\vf(x)+\sum_{Q\,\text{transit}}\Delta\ci Q\vf( x).
$$
Indeed, the claim is obvious if the point $x$ lies in some terminal cube.
Suppose now that this is not the case. Observe that

$$
\Lambda\vf(x)+\sum_{Q\,\text{transit}:s(Q)> \kappa^{k}}\Delta\ci Q\vf(x)=
\langle \vf\rangle\ci{Q^{k}} ,
$$
where $Q^{k}$ is the dyadic cube of size $\kappa^{k}$, containing $x$.
Therefore, the claim is true if
$$
\langle \vf\rangle\ci{Q^{k}}\to \vf(x)\,.
$$
But, the exceptional set for this condition has $\mu$-measure $0$.  Now the orthogonality of all $\Delta_Q\vf$ between themselves, and their orthogonality to $\La\vf$ proves the lemma.
\end{proof}

\section{Good and bad functions}
\label{gb}

We consider the functions $f$ and $g \in L^2(X;\mu)$. We fix two dyadic lattices $\Dk_1$ and $\Dk_2$ as before and define decompositions of $f$ and $g$ via Lemma \ref{Riesz},
$$
f= \La f +\sum_{Q\in \Dk_1^{tran}} \Delta_Q f,\quad g=\La g +\sum_{R\in \Dk_2^{tran}} \Delta_R g.
$$

For a  dyadic cube $R$ we denote $\cup_{i\in I_k} \pd R_i$ by $sk \,R$, called the \textit{skeleton} of $R$. Here the $R_i$ are the dyadic children of $R$. 
 
Let $\tau, m$ be parameters of the \cz \,kernel $k$. We fix $\alpha= \frac{\tau}{2\tau+2m}$.

\begin{defi}
Fix a small number $\delta>0$ and $S\ge 2$ to be chosen later.  Choose an integer $r$ such that 
\begin{equation}
\label{r}
\kappa^{-r}\le \delta^S < \kappa^{-r+1}\,.
\end{equation}
A cube $Q\in \Dk_1$ is called {\it bad} ($\delta$-bad) if there exists $R\in \Dk_2$ such that
\begin{itemize}
\item[(1)] $s(R)\ge \kappa^r s(Q)$;
\item[(2)] $\dist (Q, sk\,R) <s(Q)^{\alpha} s(R)^{1-\alpha}\,$.
\end{itemize}
\end{defi}
Let $\mathcal{B}_1$ denote the collection of all bad cubes and correspondingly let $\mathcal{G}_1$ denote the collection of good cubes.  The symmetric definition gives the collection of {\it bad} cubes $R\in \Dk_2$, denotes as $\mathcal{B}_2$.

\bigskip

We say, that $\vf = \sum_{Q\in \Dk_1^{tran}} \Delta_Q\vf$ is \textit{bad} if in the sum only bad $Q$'s participate in this decomposition with the same appling to $\psi = \sum_{Q\in \Dk_2^{tran}} \Delta_Q\psi$. In particular, given two distinct lattices $\Dk_1$ and $\Dk_2$ we fix the decomposition of $f$ and $g$ into good and bad parts:
$$
f= f_{good} + f_{bad}\,,\,\,\text{where}\,\, f_{good} = \La f + \sum_{Q\in \Dk_1^{tran}\cap\mathcal{G}_1} \Delta_Q f\,.
$$
The same applies to $g = \La g + \sum_{R\in \Dk_2^{tran}} \Delta_R g= g_{good} +g_{bad}$.

\begin{theorem}
\label{badprob}
One can choose $S= S(\alpha)$ in such a way that for any fixed $Q\in \mathcal{D}_1$,
\begin{equation}\label{prob}\mathbb{P}\{Q \,\text{is bad}\} \leq \delta^2\,.
\end{equation}
By symmetry $\mathbb{P}\{R \,\text{is bad}\} \leq \delta^2$ for any fixed $R\in \mathcal{D}_2$.
\end{theorem}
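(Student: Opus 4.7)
The plan is to run the standard Nazarov--Treil--Volberg bad cube estimate, using Christ's boundary measure property (vi) of Theorem~\ref{dyadicmetric} in place of the Euclidean boundary layer bound, and relying on the Hyt\"onen--Martikainen randomization of the lattice $\mathcal{D}_2$ from \cite{HM} for the probabilistic ingredient. Throughout, fix $Q\in\mathcal{D}_1$ with $s(Q)=\kappa^k$; all randomness lies in the choice of $\mathcal{D}_2$. Decomposing the bad event over the scale $\kappa^j$ of a witnessing $R\in\mathcal{D}_2$ and applying a union bound,
\[
\mathbb{P}(Q \text{ is bad}) \;\leq\; \sum_{j}\mathbb{P}\Bigl(\dist(Q,\partial_j\mathcal{D}_2)<\kappa^{k\alpha+j(1-\alpha)}\Bigr),
\]
where $\partial_j\mathcal{D}_2:=\bigcup_{R\in\mathcal{D}_2,\,s(R)=\kappa^j}sk\,R$ and the sum is restricted to those $j$ permitted by condition~(1), i.e.\ those for which $R$ is sufficiently larger than $Q$ (equivalently $k-j\gtrsim r$, with $r$ calibrated by \eqref{r}).

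The key probabilistic input, carried over from \cite{HM} using property~(vi) of Theorem~\ref{dyadicmetric}, is that for every fixed $x\in X$, scale $j$, and $t\in(0,1)$,
\[
\mathbb{P}\bigl(\dist(x,\partial_j\mathcal{D}_2)<t\,\kappa^j\bigr) \;\lesssim\; t^\eta.
\]
Condition~(1) forces the threshold $\kappa^{k\alpha+j(1-\alpha)}$ to exceed $\diam(Q)\approx\kappa^k$ by the large factor $(s(R)/s(Q))^{1-\alpha}\gtrsim\kappa^{-r(1-\alpha)}$, which allows the set $Q$ to be replaced by its center $z_Q$ at the cost of an absolute constant. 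Substituting $t=\kappa^{\alpha(k-j)}$ then yields the per-scale bound $\lesssim\kappa^{\eta\alpha(k-j)}$.

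Writing $m=k-j$ and summing the geometric series over admissible scales produces
\[
\mathbb{P}(Q\text{ is bad}) \;\lesssim\; \sum_{m\gtrsim r}\kappa^{\eta\alpha m} \;\lesssim\; \kappa^{\eta\alpha r} \;\approx\; \delta^{S\eta\alpha}.
\]
Choosing $S=S(\alpha)$ so that $S\eta\alpha\geq 3$ absorbs the implicit constant into one spare power of $\delta$ and delivers $\mathbb{P}(Q\text{ is bad})\leq\delta^2$, as desired. The symmetric bound for $R\in\mathcal{D}_2$ is obtained verbatim by swapping the roles of the two lattices.

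The main obstacle is the pointwise skeleton probability bound itself on a general Ahlfors regular metric space: in $\mathbb{R}^n$ it is immediate from the uniform distribution of the random shift, but here one must invoke the Hyt\"onen--Martikainen construction of \cite{HM}, whose randomness is arranged precisely so that Christ's \emph{deterministic} bound~(vi) on the measure of boundary layers integrates (via Fubini) into the required pointwise probability estimate. Once this single probabilistic tool has been assembled, the rest of the argument is the routine geometric summation over scales familiar from the Euclidean theory.
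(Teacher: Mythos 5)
The paper offers no proof of Theorem~\ref{badprob} beyond citing \cite{HM}, and your outline is precisely the standard Nazarov--Treil--Volberg/Hyt\"onen--Martikainen argument underlying that citation: a union bound over the scales of the witnessing $R$, the pointwise skeleton-probability estimate supplied by the randomized construction together with Christ's small-boundary property (vi), geometric summation in $k-j$, and the choice of $S$ via $\kappa^{r}\approx\delta^{S}$. Your reduction of $\dist(Q,sk\,R)$ to $\dist(z_Q,sk\,R)$ and the resulting bound $\lesssim\kappa^{\eta\alpha r}\approx\delta^{S\eta\alpha}$ are correct, so this is essentially the same approach and is, if anything, more explicit than what the paper provides.
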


The proof of this Theorem can be found in the paper \cite{HM}.    The use of Theorem \ref{badprob} gives us
 $S= S(\alpha)$ in such a way that for any fixed $Q\in \mathcal{D}_1$,
 \begin{equation}
 \label{probagain}
 \mathbb{P}\{Q \,\text{is bad}\} \leq \delta^2\,.
 \end{equation}
We are now ready to prove

\begin{theorem}
\label{badprobfagain}
Consider the decomposition of $f$ from Lemma \ref{Riesz}.  Then one can choose $S= S(\alpha)$ in such a way that 
\begin{equation}
\label{probf}
\mathbb{E}(\|f_{bad}\|_{L^2(X;\mu)}) \leq \delta\|f\|_{L^2(X;\mu)}\,.
\end{equation}
\end{theorem}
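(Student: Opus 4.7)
The plan is to combine the orthogonal decomposition from Lemma \ref{Riesz} with the pointwise probability estimate \eqref{probagain}, exploiting that the two random dyadic lattices $\Dk_1$ and $\Dk_2$ are drawn independently. By definition, $f_{bad}=\sum_{Q\in\Dk_1^{tran}\cap\Bk_1}\Delta_Q f$, where $\Bk_1$ denotes the (random) set of bad cubes in $\Dk_1$. Since the projections $\Delta_Q$ associated with distinct transit cubes are mutually orthogonal and annihilate $\Lambda$ (Lemma \ref{easyproperties}), the Pythagorean identity from Lemma \ref{Riesz} yields
$$
\|f_{bad}\|_{L^2(X;\mu)}^{2}=\sum_{Q\in\Dk_1^{tran}}\mathbf{1}_{\{Q\in\Bk_1\}}\,\|\Delta_Q f\|_{L^2(X;\mu)}^{2}.
$$

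Now I would take expectations, being careful to condition on the lattice $\Dk_1$ first. The key observation is that for a cube $Q\in\Dk_1$ held fixed, the event $\{Q\in\Bk_1\}$ depends only on the second lattice $\Dk_2$ (through the existence of a cube $R\in\Dk_2$ satisfying $s(R)\ge\kappa^{r}s(Q)$ and $\dist(Q,sk\,R)<s(Q)^\alpha s(R)^{1-\alpha}$), whereas the function $\Delta_Q f$ depends only on $\Dk_1$. Therefore, by independence of $\Dk_1$ and $\Dk_2$ and by \eqref{probagain},
$$
\mathbb{E}\bigl[\mathbf{1}_{\{Q\in\Bk_1\}}\,\|\Delta_Q f\|_{L^2(X;\mu)}^{2}\bigr]=\mathbb{E}_{\Dk_1}\Bigl[\mathbb{P}_{\Dk_2}(Q\in\Bk_1)\,\|\Delta_Q f\|_{L^2(X;\mu)}^{2}\Bigr]\le\delta^{2}\,\mathbb{E}_{\Dk_1}\|\Delta_Q f\|_{L^2(X;\mu)}^{2}.
$$
Summing over $Q\in\Dk_1^{tran}$ and invoking Pythagoras once more (on the full decomposition of $f$, which implies $\sum_Q\|\Delta_Q f\|^2\le\|f\|^2$) gives
$$
\mathbb{E}\|f_{bad}\|_{L^2(X;\mu)}^{2}\le\delta^{2}\,\|f\|_{L^2(X;\mu)}^{2}.
$$

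Finally, I would pass from the second moment to the first moment by Jensen's inequality (equivalently, Cauchy--Schwarz):
$$
\mathbb{E}\|f_{bad}\|_{L^2(X;\mu)}\le\bigl(\mathbb{E}\|f_{bad}\|_{L^2(X;\mu)}^{2}\bigr)^{1/2}\le\delta\,\|f\|_{L^2(X;\mu)},
$$
which is the desired bound. The constant $S=S(\alpha)$ is inherited from Theorem \ref{badprob}, so no further choice is required here.

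The only genuinely delicate point is the justification of the conditioning argument, namely that for a cube $Q$ of the first random lattice the indicator of badness is measurable with respect to the $\sigma$-algebra generated by $\Dk_2$ alone. This is built into the construction of Hyt\"onen and Martikainen \cite{HM}, where the two grids are generated by independent sequences of random parameters; once this independence is in place, everything else reduces to orthogonality plus Fubini. I do not anticipate additional obstacles.
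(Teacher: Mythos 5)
Your proof is correct and follows essentially the same route as the paper's: the Pythagorean identity from Lemma \ref{Riesz}, Jensen's inequality to pass the expectation inside the square root, the per-cube probability bound \eqref{probagain}, and Pythagoras again. Your explicit justification of the conditioning step (that badness of a fixed $Q\in\Dk_1$ is measurable with respect to $\Dk_2$ alone) is a point the paper leaves implicit, but it is the same argument.
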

The proof depends only on the property \eqref{probagain} and not on a particular definition of what it means to be a bad or good function.
\begin{proof}
By Lemma \ref{Riesz} (its left inequality),
$$
\mathbb{E}(\|f_{bad}\|_{L^2(X;\mu)}) \leq \mathbb{E}\Big(\sum_{Q\in\Dk_1^{tran}\cap\mathcal{B}_1}\|\Delta_Q f\|^2_{L^2(X;\mu)}\Big)^{1/2}\,.
$$
Then
$$
\mathbb{E}(\|f_{bad}\|_{L^2(X;\mu)}) \leq \Big(\mathbb{E}\sum_{Q\in\Dk_1^{tran}\cap\mathcal{B}_1}\|\Delta_Q f\|^2_{L^2(X;\mu)}\Big)^{1/2}\,.
$$
Let $Q$ be a fixed cube in $\mathcal{D}_1$; then, using \eqref{probagain}, we conclude: 
$$
\mathbb{E}\|\Delta_Q f\|^2_{L^2(X;\mu)}=\mathbb{P}\{Q\,\text{is bad}\} \|\Delta_Q f\|^2_{L^2(X;\mu)} \leq \delta^2 \|\Delta_Q f\|^2_{L^2(X;\mu)}\,.
$$
 Therefore, we can continue as follows:
 $$
 \mathbb{E}(\|f_{bad}\|_{L^2(X;\mu)}) \leq \delta\Big(\sum_{Q\in\Dk_1^{tran}\cap\mathcal{B}_1}\|\Delta_Q f\|^2_{L^2(X;\mu)}\Big)^{1/2}\leq \delta\|f\|_{L^2(X;\mu)}\,.
 $$
 The last inequality uses Lemma \ref{Riesz} again (its right inequality).
\end{proof}
This theorem can also be found in the paper \cite{HM}.

\subsection{Reduction to Estimates on Good Functions}
\label{redtogood}

We consider two random dyadic lattices $\Dk_1$ and  $\Dk_2$ as constructed in \cite{HM}.  Take now two functions $f$ and $g\in L^2(X;\mu)$ decomposed according to Lemma \ref{Riesz} 
$$
f=\La f + \sum_{Q\in \Dk_1^{tran}} \Delta_Q f\,,\,\,g=\La g + \sum_{R\in \Dk_2^{tran}} \Delta_R g\,.
$$
Recall that we can now write $f=f_{good} + f_{bad}$, $g=g_{good}+ g_{bad}$.  Then 
$$
(Tf,g) = (Tf_{good}, g_{good}) + R(f,g)\,,\,\,\text{where}\,\, R(f,g)= (Tf_{bad}, g) + (Tf_{good}, g_{bad})\,.
$$

\begin{theorem}
\label{R}
Let $T$ be any operator with bounded kernel. Then
$$
\mathbb{E} |R(f, g)| \le 2\,\delta\|T\|_{L^2(X;\mu)\to L^2(X;\mu)} \|f\|_{L^2(X;\mu)} \|g\|_{L^2(X;\mu)}\,.
$$ 
\end{theorem}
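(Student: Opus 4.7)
The plan is to decompose $R(f,g)$ into its two pieces, bound each piece pointwise (with respect to the randomness) by Cauchy--Schwarz together with the operator norm of $T$, and then take expectation using Theorem \ref{badprobfagain} in the $f$-piece and its symmetric analogue in the $g$-piece. Independence of the two random lattices $\Dk_1$ and $\Dk_2$ is what lets me apply each tail estimate to the variable on which it acts without disturbing the other factor.

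For the first piece, by Cauchy--Schwarz in $L^2(X;\mu)$ and the definition of the operator norm,
$$
|(Tf_{bad},g)| \le \|T\|_{L^2(X;\mu)\to L^2(X;\mu)}\,\|f_{bad}\|_{L^2(X;\mu)}\,\|g\|_{L^2(X;\mu)}.
$$
Only $f_{bad}$ depends on the randomness (on $\Dk_1$), while $\|g\|_{L^2(X;\mu)}$ is deterministic. Taking expectation and then invoking \eqref{probf} from Theorem \ref{badprobfagain} gives
$$
\mathbb{E}|(Tf_{bad},g)| \le \|T\|_{L^2(X;\mu)\to L^2(X;\mu)}\,\mathbb{E}\|f_{bad}\|_{L^2(X;\mu)}\,\|g\|_{L^2(X;\mu)} \le \delta\,\|T\|_{L^2(X;\mu)\to L^2(X;\mu)}\,\|f\|_{L^2(X;\mu)}\|g\|_{L^2(X;\mu)}.
$$

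For the second piece, the same Cauchy--Schwarz argument yields
$$
|(Tf_{good},g_{bad})| \le \|T\|_{L^2(X;\mu)\to L^2(X;\mu)}\,\|f_{good}\|_{L^2(X;\mu)}\,\|g_{bad}\|_{L^2(X;\mu)}.
$$
Since $f_{good}$ is a sub-sum of the orthogonal decomposition of $f$ provided by Lemma \ref{Riesz}, the Pythagorean identity in that lemma gives $\|f_{good}\|_{L^2(X;\mu)}\le \|f\|_{L^2(X;\mu)}$ pointwise in the randomness. Conditioning on $\Dk_1$ and using independence of $\Dk_1$ and $\Dk_2$, the symmetric form of \eqref{probf} (applied to $g$ and $\Dk_2$) bounds $\mathbb{E}_{\Dk_2}\|g_{bad}\|_{L^2(X;\mu)} \le \delta\|g\|_{L^2(X;\mu)}$, so
$$
\mathbb{E}|(Tf_{good},g_{bad})| \le \delta\,\|T\|_{L^2(X;\mu)\to L^2(X;\mu)}\,\|f\|_{L^2(X;\mu)}\|g\|_{L^2(X;\mu)}.
$$
Adding the two bounds produces the factor $2\delta$ and finishes the proof.

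There is no real obstacle here: the statement assumes $T$ has bounded kernel purely so that $\|T\|_{L^2(X;\mu)\to L^2(X;\mu)}$ is a finite quantity that can be moved to the right-hand side. The only points requiring care are (a) that $\|f_{good}\|\le \|f\|$ and $\|g\|$ is deterministic, both immediate from Lemma \ref{Riesz}, and (b) that the two random grids are independent so that the tail estimate can be applied on the side where the bad part appears. Everything else is Cauchy--Schwarz.
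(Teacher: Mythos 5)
Your proof is correct and follows essentially the same route as the paper's: bound each of the two pieces of $R(f,g)$ by the operator norm times $\|f_{bad}\|\,\|g\|$ and $\|f\|\,\|g_{bad}\|$ respectively (using that the good/bad splitting is an orthogonal projection, hence norm non-increasing), then take expectations and apply Theorem \ref{badprobfagain} and its symmetric version. Your extra remark about independence of the two lattices is harmless but not actually needed once $\|f_{good}\|$ is bounded deterministically by $\|f\|$.
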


\begin{remark}
Notice that the estimate depends on the norm of $T$ not on the bound on its kernel.
\end{remark}

\begin{proof}
The procedure of taking the good and bad part of a function are projections in $L^2(X;\mu)$ and so they do not increase the norm.  Since we have that the operator $T$ is bounded, then
$$
|R(f,g)|\leq \|T\|_{L^2(X;\mu)\to L^2(X;\mu)}\left(\|g\|_{L^2(X;\mu)}\|f_{bad}\|_{L^2(X;\mu)} + \|f\|_{L^2(X;\mu)}\|g_{bad}\|_{L^2(X;\mu)}\right)
$$ 
Therefore, upon taking expectations we find
$$
\mathbb{E}|R(f,g)| \le \|T\|_{L^2(X;\mu)\to L^2(X;\mu)}\left(\|g\|_{L^2(X;\mu)}\mathbb{E}(\|f_{bad}\|_{L^2(X;\mu)}) + \|f\|_{L^2(X;\mu)}\mathbb{E}(\|g_{bad}\|_{L^2(X;\mu)})\right)\,.
$$
Using Theorem \ref{badprobfagain} we finish the proof.

\end{proof}

We see that we need now only to estimate
\begin{equation}
\label{good}
|(Tf_{good}, g_{good})| \le C(\tau,m,d, T1) \|f\|_{L^2(X;\mu)}\|g\|_{L^2(X;\mu)}\,.
\end{equation}
In fact, considering any operator $T$ with bounded kernel we conclude
$$
(Tf,g) = \mathbb{E}(Tf,g) = \mathbb{E}(Tf_{good}, g_{good}) + \mathbb{E} R(f,g)\,.
$$
Using Theorem \ref{R} and \eqref{good} we have
$$
|(Tf,g)| \le C\,\|f\|_{L^2(X;\mu)}\|g\|_{L^2(X;\mu)} + 2\delta\|T\|_{L^2(X;\mu)\to L^2(X;\mu)}\|f\|_{L^2(X;\mu)}\|g\|_{L^2(X;\mu)}\,.
$$
From here, taking the supremum over $f$ and $g$ in the unit ball of $L^2(X;\mu)$, and choosing $\delta=\frac14$ we get
$$
\|T\|_{L^2(X;\mu)\to L^2(X;\mu)}\le 2C\,.
$$

\subsection{Splitting $(Tf_{good}, g_{good})$ into Three Sums}
\label{spli}

First let us get rid of the projection $\La$. We fix two corresponding dyadic lattices $\Dk_1$ and $\Dk_2$.  Recall that $F=\supp\mu$ is deep inside a unit cube $Q$ of the standard dyadic lattice $\Dk$ as well as inside the shifted unit cubes $Q^0\in \Dk_1$ and $R^0\in \Dk_2$.  If $f\in L^2(X;\mu)$, we have
\begin{eqnarray*}
\|T\La f\|_{L^2(X;\mu)}  & = & \langle f\rangle_{Q^0} \|T\chi_{Q^0}\|_{L^2(X;\mu)}\\
 & \leq &  A_{\ref{T1}}\frac{\|f\|_{L^2(X;\mu)} \mu(Q^0)^{1/2}}{\mu(Q^0)} \mu(Q^0)^{1/2}\\
 & = & A_{\ref{T1}}\|f\|_{L^2(X;\mu)}\,.
\end{eqnarray*} 

So we can replace $f$ by $f-\La f$ and identically we can repeat this argument with $g$ and from now on we may assume further that
$$
\int_X f(x)\,d\mu(x)=0\textnormal{ and }\int_X g(x)\,d\mu(x)=0\,.
$$
Based on the reductions above, we can now think that $f$ and $g$ are good functions with zero averages. We skip mentioning below that $Q\in \Dk_1^{tran}$ and $R\in \Dk_2^{tran}$, since this will always be the case by the convention established above.

To study the action of the \cz\, operator $T$ on $f$ and $g$, we split the pairing in the following manner,
\begin{align*}
(Tf,g)= \sum_{Q \in\mathcal{G}_1, R\in \mathcal{G}_2 , s(Q) \le s(R)} (\Delta_Q f, \Delta_R g) +
\sum_{Q \in\mathcal{G}_1, R\in \mathcal{G}_2, s(Q) > s(R)} (\Delta_Q f, \Delta_R g) \,.
\end{align*}
The question of convergence of the infinite sum can be avoided here, as we can think that the functions $f$ and $g$ are only finite sums.  This removes the question of convergence and allows us to rearrange and group the terms in the sum in any way we want.

We need to estimate only the first sum, as the second will follow by symmetry. For the sake of notational simplicity we will skip mentioning that the cubes $Q$ and $R$ are good and we will skip mentioning $s(Q)\le s(R)$. So, for now on, 
$$
\sum_{Q,R: \text{other conditions}}\textnormal{ means } \sum_{Q,R: s(Q)\le s(R),\, Q \in\mathcal{G}_1, \,R\in\mathcal{G}_2 , \,\text{other conditions}}\,.
$$

\begin{remark}
It is convenient sometimes to think that the summation 
$$
\sum_{Q,R: \,\text{other conditions}}
$$
goes over good $Q$ and \textit{all} $R$. Formally, this does not matter, since the functions $f$ and $g$ are good functions, and so this merely reduces to adding or omitting several zeros to the sum. For the symmetric sum over $Q,R: s(Q)>s(R)$ the roles of $Q$ and $R$ in this remark must of course be interchanged.
\end{remark}

The definition of $\delta$-badness involved a large integer $r$, see \eqref{r}. Use this notation to write our sum over $s(Q)\le s(R)$ as follows
\begin{align*}
\sum_{Q,R} (\Delta_Qf, \Delta_R g) = \sum_{Q,R: s(Q)\ge \kappa^{-r} s(R)} + \sum_{Q,R: s(Q)< \kappa^{-r} s(R)}=
\sum_{Q,R: s(Q)\ge \kappa^{-r} s(R),\,\dist(Q,R) \le s(R)}+\\
\bigg[\sum_{Q,R: s(Q)\ge \kappa^{-r} s(R),\,\dist(Q,R)>s(R)}+\sum_{Q,R: s(Q)< \kappa^{-r} s(R),\,Q\cap R=\emptyset}\bigg] + \sum_{Q,R: s(Q)< \kappa^{-r} s(R),\,Q\cap R\neq\emptyset}\\
=:\sigma_1 +\sigma_2 +\sigma_3\,.
\end{align*}

\subsection{Three Potential Estimates of $\int_X\int_X k(x,y) f(x)g(y)\,d\mu(x)\,d\mu(y)$}
\label{threetypes}
Recall that the kernel $k(x,y)$ of $T$ satisfies the estimate
$$
|k(x,y)|\le \frac{1}{\max (d(x)^m, d(y)^m)}\,,\,\,\,d(x) =\dist (x,X\setminus \Omega)\,,
$$ $\Omega$ being an open set in $X$, and 
$$
|k(x,y)|\le\frac{C_{CZ}}{\rho(x,y)^m}\quad\forall x\neq y\in X;
$$
and
$$
|k(x,y)-k(x,y')|+|k(x,y)-k(x',y)|\le C_{CZ}\frac{\rho(x,x')^{\tau}}{\rho(x,y)^{\tau+m}}
$$
provided that $\rho(x,x')\leq \delta\rho(x,y)$, with some fixed constants numbers $C_{CZ}, \tau, m$.

First, we will sometimes write 
$$
\int_X\int_X k(x,y) f(x)g(y)\,d\mu(x)\,d\mu(y)=\int_X\int_X [k(x,y)-k(x_0,y)]f(x)g(y)\,d\mu(x)\,d\mu(y)
$$
using the fact that our $f$ and $g$ will actually be $\Delta_Q f$ and $\Delta_R g$ and so their
integrals are zero. Temporarily write $K(x,y)$ for either $k(x,y)$ or $k(x,y)-k(x_0,y)$.

After that we have three logical possibilities to estimate 
$$
\int_X\int_X K(x,y) f(x)g(y)\,d\mu(x)\,d\mu(y)\,.
$$
\begin{itemize}
\item[(1)] Estimate $|K|$ in $L^{\infty}$, and $f, g$ in $L^1$ norms;
\item[(2)] Estimate $|K|$ in $L^{\infty}L^1$ norm, and $f$ in $L^1$ norm, $g$ in $L^{\infty}$ norm (or maybe, do this symmetrically); 
\item[(3)] Estimate $|K|$ in $L^1$ norm, and $f, g$ in $L^{\infty}$ norms.
\end{itemize}

The third method is widely used for Calder\'on--Zygmund estimates on homogeneous spaces (say with respect to Lebesgue measure), but it is very dangerous to use in the case of a nonhomogeneous measure. Here is the reason. After $f$ and $g$ are estimated in the $L^{\infty}$ norm, one needs to continue these estimates to have $L^2$ norms. There is nothing strange in that as usually $f$ and $g$ are almost proportional to characteristic functions. But for $f$ living on $Q$ such that $f=c_Q\chi_Q$ ($c_Q$ is a constant),
$$
\|f\|_{L^{\infty}(X:\mu)} \leq \frac1{\mu(Q)^{1/2}}\|f\|_{L^2(X;\mu)}\,.
$$
The same reasoning applies for $g$ on $R$. Then
$$
\Big|\int_X\int_X K(x,y) f(x)g(y)\,d\mu(x)\,d\mu(y)\Big|\leq \frac1{\left(\mu(Q)\mu(R)\right)^{1/2}}\|f\|_{L^2(X;\mu)}\|g\|_{L^2(X;\mu)}\,.
$$
And the nonhomogeneous measure has no estimate from below. Having two uncontrollable almost zeroes in the denominator is a very bad idea.  We will never use the estimate of type (3). 

On the other hand, estimates of type (2) are much less dangerous (although requires the care as well).  This is because, in this case one applies 
 $$
 \|f\|_{L^{1}(X;\mu)} \leq \mu(Q)^{1/2}\|f\|_{L^2(X;\mu)}\textnormal{ and } \|g\|_{L^{\infty}(X;\mu)} \leq\frac1{\mu(R)^{1/2}}\|g\|_{L^2(\mu)}\,,$$and gets 
 $$
 \Big|\int_X\int_X K(x,y) f(x)g(y)\,d\mu(x)\,d\mu(y)\Big|\leq \left(\frac{\mu(Q)}{\mu(R)}\right)^{1/2}\|f\|_{L^2(X;\mu)}\|g\|_{L^2(X;\mu)}\,.
 $$
 If we choose to use estimate of the type (2) only for pairs $Q,R$ such that $Q\subset R$ we are in good shape. This approach is what we will end up going when estimating $\sigma_3$.
 
\bigskip

\noindent{\bf Plan}. The first sum is the ``diagonal" part of the operator, $\sigma_1$. The second sum, $\sigma_2$ is the ``long range interaction".  The final sum, $\sigma_3$, is the ``short range interaction".  The diagonal part will be estimated using our $T1$ assumption of Theorem \ref{T1}, for the long range interaction we will use the first type of estimates described above, for the short range interaction we will use estimates of types (1) and (2) above. But all this will be done carefully!

\section{The Long Range Interaction: Controlling Term $\sigma_2$}
\label{lr}

We first prove a lemma that demonstrates that for functions with supports that are far apart, we have some good control on the bilinear form induced by our \cz\, operator $T$.  For two dyadic cubes $Q$ and $R$, we set
$$
D(Q,R):=s(Q)+s(R)+\dist(Q,R).
$$ 

\begin{lemma} 
\label{FarInteractionLemma}
Suppose that $Q$ and $R$ are two cubes in $X$, such that $s(Q) \leq s(R)$. Let $\vf\ci Q,\psi\ci R\in L^2(X;\mu)$. Assume that $\vf\ci Q$ vanishes outside $Q$, and $\psi\ci R$ vanishes outside $R$; $\int_{X}\vf\ci Qd\mu=0$ and, at last, $\dist(Q,\supp\psi\ci R)\ge s(Q)^{\al}s(R)^{1-\al}$. Then
$$
|(\vf\ci Q,T\psi\ci R)|\le A\,C\,\frac{s(Q)^{\frac{\tau}{2}}s(R)^{\frac{\tau}{2}}}{D(Q,R)^{m+\tau}}\sqrt{\mu(Q)\mu(R)}\|\vf\ci Q\|\ci{L^2(X;\mu)}\|\psi\ci R\|\ci{L^2(X;\mu)}.
$$
\end{lemma}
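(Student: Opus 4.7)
The plan is to follow the standard Nazarov--Treil--Volberg template: exploit the zero-mean condition on $\vf\ci Q$, apply the H\"older regularity of the kernel in its first argument, and then balance the resulting factors through the specific choice $\alpha=\tau/(2(\tau+m))$. First, pick any reference point $x\ci Q\in Q$ (for definiteness, the center $z_\alpha$ from property~(v) of Theorem~\ref{dyadicmetric}). Since $\int_X\vf\ci Q\,d\mu=0$, I may subtract $k(x\ci Q,y)$ inside the integral for free and write
$$
(\vf\ci Q,T\psi\ci R)=\int_Q\!\int_R\bigl[k(x,y)-k(x\ci Q,y)\bigr]\,\vf\ci Q(x)\,\psi\ci R(y)\,d\mu(y)\,d\mu(x).
$$

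For $x\in Q$ one has $\rho(x,x\ci Q)\lesssim s(Q)$ by property~(iv), while for $y\in\supp\psi\ci R$ the hypothesis gives $\rho(x\ci Q,y)\ge\dist(Q,\supp\psi\ci R)\ge s(Q)^\alpha s(R)^{1-\alpha}$. In the regime where $\rho(x,x\ci Q)\le\delta\,\rho(x\ci Q,y)$, the H\"older estimate yields
$$
\bigl|k(x,y)-k(x\ci Q,y)\bigr|\le C_{CZ}\frac{\rho(x,x\ci Q)^\tau}{\rho(x\ci Q,y)^{\tau+m}}\lesssim\frac{s(Q)^\tau}{\rho(x\ci Q,y)^{\tau+m}}.
$$
The whole point of the choice $\alpha=\tau/(2(\tau+m))$ is that $\alpha(\tau+m)=\tau/2$. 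I now split into two subcases. If $\dist(Q,R)\le s(R)$, then $D(Q,R)\sim s(R)$, and $\rho(x\ci Q,y)\ge s(Q)^\alpha s(R)^{1-\alpha}$ gives
$$
\frac{s(Q)^\tau}{\rho(x\ci Q,y)^{\tau+m}}\le\frac{s(Q)^{\tau-\alpha(\tau+m)}}{s(R)^{(1-\alpha)(\tau+m)}}=\frac{s(Q)^{\tau/2}}{s(R)^{m+\tau/2}}\sim\frac{s(Q)^{\tau/2}s(R)^{\tau/2}}{D(Q,R)^{m+\tau}}.
$$
If instead $\dist(Q,R)>s(R)$, then $D(Q,R)\sim\dist(Q,R)\lesssim\rho(x\ci Q,y)$, and combining with $s(Q)^\tau\le s(Q)^{\tau/2}s(R)^{\tau/2}$ (which holds since $s(Q)\le s(R)$) produces the same final expression. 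Integrating this uniform pointwise bound against $|\vf\ci Q(x)\psi\ci R(y)|$ and invoking Cauchy--Schwarz in the form $\|\vf\ci Q\|\ci{L^1(\mu)}\le\mu(Q)^{1/2}\|\vf\ci Q\|\ci{L^2(\mu)}$, $\|\psi\ci R\|\ci{L^1(\mu)}\le\mu(R)^{1/2}\|\psi\ci R\|\ci{L^2(\mu)}$ closes the estimate.

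The main technical nuisance I anticipate is that the H\"older regime $\rho(x,x\ci Q)\le\delta\,\rho(x\ci Q,y)$ may fail when $s(Q)$ and $s(R)$ are comparable, since then only $(s(Q)/s(R))^{1-\alpha}\le 1$ is guaranteed and this ratio need not be smaller than the kernel constant $\delta$. In that subregime the mean-zero trick is unnecessary: one just applies the size estimate $|k(x,y)|\le C_{CZ}/\rho(x,y)^m$ to both $k(x,y)$ and $k(x\ci Q,y)$, uses $\rho(x,y),\rho(x\ci Q,y)\gtrsim s(Q)^\alpha s(R)^{1-\alpha}\sim s(R)\sim D(Q,R)$, and checks that the resulting $D(Q,R)^{-m}$ is equivalent (up to a constant) to $s(Q)^{\tau/2}s(R)^{\tau/2}/D(Q,R)^{m+\tau}$ because $s(Q)\sim s(R)\sim D(Q,R)$ in this regime. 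No use of the extra hypothesis $|k(x,y)|\le 1/\max(d(x)^m,d(y)^m)$ or of the $T1$ testing conditions is needed for this particular lemma; all the work is in the case analysis above.
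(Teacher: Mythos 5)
Your proof is correct and follows essentially the same route as the paper's: subtract $k(x\ci Q,y)$ using the mean-zero property, apply the H\"older estimate with $\alpha(\tau+m)=\tau/2$, split into the cases $\dist\le s(R)$ and $\dist>s(R)$, and finish with Cauchy--Schwarz. Your extra case analysis for when $\rho(x,x\ci Q)\le\delta\,\rho(x\ci Q,y)$ might fail is a point the paper silently glosses over (it only records $\rho(x\ci Q,y)\gtrsim\rho(x,x\ci Q)$ without comparing to $\delta$), and your fallback to the size estimate there is handled correctly, since in that regime $s(Q)\sim s(R)\sim D(Q,R)$ up to constants depending on $\delta$.
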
 
  
\begin{remark}
Note that we require only that the support of the function $\psi_R$ lies far from the cube $Q$; the cubes $Q$ and $R$ themselves may intersect!  Such situations will arise when estimating the term $\sigma_2$.
\end{remark}

\begin{proof}

Let $x\ci Q$ be the center of the cube $Q$. Note that for all $x\in Q$,
$y\in \supp\psi\ci R$, we have
$$
\rho(x\ci Q,y)\ge \frac{s(Q)}{2}+\dist(Q,\supp\psi\ci R)\ge
\frac{s(Q)}{2} + 2^{r(1-\al)}s(Q)\gtrsim s(Q)\gtrsim\rho(x,x\ci Q).
$$
Therefore,
\begin{align*}
|( \vf\ci Q, T \psi\ci R )|&=
\Bigl|\int_X\int_X k(x,y)\vf\ci Q(x)\psi\ci R(y)\, d\mu(x)\, d\mu(y)\Bigr|\\
&=
\Bigl|\int_X\int_X [k(x,y)-k(x\ci Q,y)]
\vf\ci Q(x)\psi\ci R(y)\, d\mu(x)\, d\mu(y)\Bigr|\\
&\lesssim
\frac{s(Q)^\tau}{\dist(Q,\supp\psi\ci R)^{m+\tau}}
\|\vf\ci Q\|\ci{L^1(X;\mu)}\|\psi\ci R\|\ci{L^1(X;\mu)}.
\end{align*}
There are two possible cases.

{\bf Case 1: $\dist(Q,\supp\psi\ci R)\ge s(R)$.}
Then
$$
D(Q,R):=s(Q)+s(R)+\dist(Q,R)\le
3\dist(Q,\supp\psi\ci R)
$$
and therefore
$$
\frac{s(Q)^\tau}{\dist(Q,\supp\psi\ci R)^{m+\tau}}\lesssim 
\frac{s(Q)^\tau}{D(Q,R)^{m+\tau}}\lesssim\frac{s(Q)^{\frac{\tau}{2}}s(R)^{\frac{\tau}{2}}}{D(Q,R)^{m+\tau}}.
$$

{\bf Case 2: $s(Q)^{\al}s(R)^{1-\al}\le \dist(Q,\supp\psi\ci R)\le s(R)$.}
Then $D(Q,R)\le 3s(R)$ and we get
$$
\frac{s(Q)^\tau}{\dist(Q,\supp\psi\ci R)^{m+\tau}}\le
\frac{s(Q)^\tau}{[s(Q)^{\al}s(R)^{1-\al}]^{m+\tau}}=
\frac{s(Q)^{\frac{\tau}{2}}s(R)^{\frac{\tau}{2}}}{s(R)^{m+\tau}}\lesssim
\frac{s(Q)^{\frac{\tau}{2}}s(R)^{\frac{\tau}{2}}}{D(Q,R)^{m+\tau}}.
$$
Here, key to the proof was the choice of $\alpha=\frac{\tau}{2(\tau+m)}$.  Now, to finish the proof of the lemma, it remains only to note that
$$
\|\vf\ci Q\|\ci{L^1(X;\mu)}\le
\sqrt{\mu(Q)}\|\vf\ci Q\|\ci{L^2(X;\mu)}
\text{ and }
\|\psi\ci R\|\ci{L^1(X;\mu)}\le
\sqrt{\mu(R)}\|\psi\ci R\|\ci{L^2(X;\mu)}.
$$

\end{proof}

Applying this lemma to $\vf\ci Q=\Delta\ci Q f$ and $\psi\ci R=\Delta\ci R g$, we
obtain
\begin{equation}
\label{twostar} 
|\sigma_2|\lesssim \sum_{Q,R}\frac{s(Q)^{\frac{\tau}{2}}s(R)^{\frac{\tau}{2}}}{D(Q,R)^{m+\tau}}\sqrt{\mu(Q)}\sqrt{\mu(R)}\|\Delta\ci Q f\|\ci{L^2(X;\mu)}\|\Delta\ci R g\|\ci{L^2(X;\mu)}\,.
\end{equation}

To control term $\sigma_2$ the computations above suggest that we will define a matrix operator, depending on the cubes $Q$ and $R$ and show that it is a bounded operator on $\ell^2$.

\begin{lemma}
\label{TQR}
Define
$$
T\ci{Q,R}:=
\frac{s(Q)^{\frac{\tau}{2}}s(R)^{\frac{\tau}{2}}}{D(Q,R)^{m+\tau}}
\sqrt{\mu(Q)}\sqrt{\mu(R)}\qquad (Q\in\mathcal{D}_1^{tr}  ,\,
R\in\mathcal{D}_2^{tr}  ,\,s(Q)\leq s(R)\,).
$$
Then, for any two families  $\{a\ci Q\}\ci{Q\in\mathcal{D}_1^{tr}  }$ and $\{b\ci R\}\ci{R\in\mathcal{D}_2^{tr}}$ of nonnegative numbers, one has
$$
\sum_{Q,R}T\ci{Q,R}a\ci Q b\ci R\le
A\,C\,\Bigl[\sum_{Q}a\ci Q^2\Bigr]^{\frac12}\Bigl[\sum_{R}b\ci R^2\Bigr]^{\frac12}.
$$
\end{lemma}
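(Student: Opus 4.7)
\textbf{Proof plan for Lemma~\ref{TQR}.} The plan is to verify the estimate by Schur's test with the natural weights $h_1(Q)=\sqrt{\mu(Q)}$ and $h_2(R)=\sqrt{\mu(R)}$. That is, writing the desired bilinear bound as an $\ell^2\to\ell^2$ boundedness statement for the matrix $(T_{Q,R})$, it suffices to prove the two row/column estimates
\begin{equation*}
\sum_R T_{Q,R}\sqrt{\mu(R)}\lesssim \sqrt{\mu(Q)}\quad(\text{all }Q\in\Dk_1^{tr}),\qquad
\sum_Q T_{Q,R}\sqrt{\mu(Q)}\lesssim \sqrt{\mu(R)}\quad(\text{all }R\in\Dk_2^{tr}).
\end{equation*}
The two estimates are symmetric in $Q$ and $R$ modulo the scale constraint $s(Q)\le s(R)$, so I will describe the first one in detail.

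For the row sum, I factor out the terms depending on $Q$:
\begin{equation*}
\sum_R T_{Q,R}\sqrt{\mu(R)}\;=\;s(Q)^{\tau/2}\sqrt{\mu(Q)}\,\sum_{R:\,s(R)\ge s(Q)}\frac{s(R)^{\tau/2}\,\mu(R)}{D(Q,R)^{m+\tau}},
\end{equation*}
so the target reduces to showing $\sum_{R}\frac{s(R)^{\tau/2}\mu(R)}{D(Q,R)^{m+\tau}}\lesssim s(Q)^{-\tau/2}$. I would then split the sum by scale and by distance annulus. For each integer $l\ge 0$, write $\rho_l=\kappa^{-l}s(Q)$ and consider the $R$'s of scale $s(R)=\rho_l$, which are pairwise disjoint dyadic cubes at a fixed generation of $\Dk_2$. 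Since $D(Q,R)\ge s(R)=\rho_l$, I split further over dyadic annuli $D(Q,R)\in[2^j\rho_l,2^{j+1}\rho_l]$, $j\ge 0$. The $R$'s in such an annulus lie in a ball $B(x_Q,C\,2^j\rho_l)$ centered at a point of the transit cube $Q$; by Lemma~\ref{obv2}, $\mu(B(x_Q,C\,2^j\rho_l))\lesssim (2^j\rho_l)^m$, so the total $\mu$-mass of this annular layer is $\lesssim (2^j\rho_l)^m$. Hence
\begin{equation*}
\sum_{R:\,s(R)=\rho_l}\frac{\mu(R)}{D(Q,R)^{m+\tau}}\;\lesssim\;\sum_{j\ge 0}\frac{(2^j\rho_l)^m}{(2^j\rho_l)^{m+\tau}}\;\lesssim\;\rho_l^{-\tau},
\end{equation*}
and so $\sum_{R:\,s(R)=\rho_l}\frac{s(R)^{\tau/2}\mu(R)}{D(Q,R)^{m+\tau}}\lesssim \rho_l^{-\tau/2}$. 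Summing over $l\ge 0$ gives a geometric series $\sum_{l\ge 0}\kappa^{l\tau/2}s(Q)^{-\tau/2}\lesssim s(Q)^{-\tau/2}$, which yields the first Schur inequality.

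The column sum is done in the same style: fix $R\in\Dk_2^{tr}$, parametrize the $Q$'s of scale $s(Q)=\kappa^n s(R)$ for $n\ge 0$, and split over distance annuli $D(Q,R)\sim 2^j s(R)$, $j\ge 0$. Using the transit property of $R$ and Lemma~\ref{obv2} applied at a point $x_R\in R$, the total $\mu$-mass of the $Q$'s in each annular layer is $\lesssim (2^j s(R))^m$, so
\begin{equation*}
\sum_{Q:\,s(Q)=\kappa^n s(R)}\frac{s(Q)^{\tau/2}\mu(Q)}{D(Q,R)^{m+\tau}}\;\lesssim\; (\kappa^n s(R))^{\tau/2}\cdot s(R)^{-\tau}\;=\;\kappa^{n\tau/2}\,s(R)^{-\tau/2},
\end{equation*}
and summing the geometric series over $n\ge 0$ produces the bound $s(R)^{-\tau/2}$; multiplying by the remaining $s(R)^{\tau/2}\sqrt{\mu(R)}$ gives the second Schur inequality. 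Combining the two via Schur's lemma delivers the claimed $\ell^2$ bound.

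The only nontrivial point is that the sums of $\mu(R)$ (resp.\ $\mu(Q)$) over a geometric annulus be comparable to the Ahlfors-regular size $r^m$; here the hypothesis $Q,R\in\mathcal{D}^{tr}$ is crucial, because outside of transit cubes the non-Ahlfors balls live and such a local mass bound fails. Thus the main care in the proof is simply to make sure that whenever we invoke $\mu(B(x,r))\lesssim r^m$ the chosen center $x$ lies in a transit cube, which is immediate from the standing assumption of the lemma.
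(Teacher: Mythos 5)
Your proof is correct and follows essentially the same route as the paper: a Schur test powered by the geometric decay in the scale ratio coming from the factor $s(Q)^{\tau/2}s(R)^{\tau/2}/D(Q,R)^{m+\tau}$, an annular decomposition in $D(Q,R)$, and the crucial mass bound $\mu(B(x,r))\lesssim r^m$ for radii $r\gtrsim s(Q)$ centered in a \emph{transit} cube (Lemma \ref{obv2}), which is exactly where the paper also invokes the transit hypothesis. The only difference is packaging: you apply the Schur test directly to the discrete matrix with weights $\sqrt{\mu(Q)}$, $\sqrt{\mu(R)}$, while the paper first slices by the scale ratio, encodes each block as an integral operator with kernel $k_j(x,y)=\kappa^{j\tau}/[\kappa^j+\rho(x,y)]^{m+\tau}$ and runs the Schur test there; the same geometric series appear in both.
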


\begin{remark}
Note that $T\ci{Q,R}$ is defined for all $Q$ and $R$ with $s(Q)\le s(R)$
and that the conditions $\dist(Q,R)\ge s(Q)^{\al}s(R)^{1-\al}$ (or even the condition
$Q\cap R=\emptyset$) no longer appears as a condition in the summation!
\end{remark}

Assuming Lemma \ref{TQR} for the moment, the estimate of $\sigma_2$ then proceeds in an obvious fashion.  
\begin{eqnarray*}
|\sigma_2| & \lesssim & \sum_{Q,R}\frac{s(Q)^{\frac{\tau}{2}}s(R)^{\frac{\tau}{2}}}{D(Q,R)^{m+\tau}}\sqrt{\mu(Q)}\sqrt{\mu(R)}\|\Delta\ci Q f\|\ci{L^2(X;\mu)}\|\Delta\ci R g\|\ci{L^2(X;\mu)}\\
 & \lesssim & \left(\sum_{Q}\|\Delta\ci Q f\|\ci{L^2(X;\mu)}^2\right)^{1/2}\left(\sum_{R}\|\Delta\ci R g\|\ci{L^2(X;\mu)}^2\right)^{1/2}\\
 & \lesssim & \| f\|_{L^2(X;\mu)}\| g\|_{L^2(X;\mu)}.
\end{eqnarray*}
Here the first line follows by (\ref{twostar}), the second by Lemma \ref{TQR}, and finally the last by Lemma \ref{Riesz}.  We now turn to the proof of Lemma \ref{TQR}.

\begin{proof}
Let us ``slice'' the matrix $T\ci{Q,R}$ according to the ratio $\frac{s(Q)}{s(R)}$. Namely, let
$$
T^{(k)}_{Q,R}=\left\{\aligned T\ci{Q,R}&\qquad\text{if } s(Q)=\kappa^{-k}s(R)\,;\\0&\qquad\text{otherwise}\,,\endaligned\right.
$$
($k=0,1,2,\dots$).
To prove the lemma, it is enough to show that for every $k\ge 0$,
$$
\sum_{Q,R}T^{(k)}\ci{Q,R}a\ci Q b\ci R\le C\,2^{-\frac{\tau}{2}k}\Bigl[\sum_{Q}a\ci Q^2\Bigr]^{\frac12}\Bigl[\sum_{R}b\ci R^2\Bigr]^{\frac12}.
$$
The matrix $\{T^{(k)}_{Q,R}\}$ has a ``block'' structure since the variables $b\ci R$ corresponding to the cubes $R\in\mathcal{D}_2^{tr}$ for which $s(R)=\kappa^{j}$ can only interact with the variables $a\ci Q$ corresponding to the cubes $Q\in\mathcal{D}_1^{tr} $, for which $s(Q)=\kappa^{j-k}$. Thus, to get the desired inequality, it is enough to estimate each block separately, i.e., to demonstrate that
$$
\sum_{Q,R\,:\,s(Q)=\kappa^{j-k},s(R)=\kappa^j}T^{(k)}\ci{Q,R}a\ci Q b\ci R\leq C\,\Bigl[\sum_{Q\,:\,s(Q)=\kappa^{j-k}}a\ci Q^2\Bigr]^{\frac12}
\Bigl[\sum_{R\,:\,s(R)=\kappa^j}b\ci R^2\Bigr]^{\frac12}.
$$
Let us introduce the functions
$$
F(x):=\sum_{Q\,:\,s(Q)=\kappa^{j-k}}\frac{a\ci Q}{ \sqrt{\mu(Q)} }
\chi\ci Q(x)
\qquad
\text{and}
\qquad
G(x):=\sum_{R\,:\,\ell(R)=\kappa^{j}}\frac{b\ci R}{ \sqrt{\mu(R)} }
\chi\ci R(x).
$$
Note that the cubes of a given size in one dyadic lattice do not intersect (Property (ii) of Theorem \ref{dyadicmetric}),
and therefore at each point $x\in X$, at most one term in the sum can be
non-zero. Also observe that
$$
\|F\|\ci{L^2(X;\mu)}=
\Bigl[\sum_{Q\,:\,s(Q)=\kappa^{j-k}}a\ci Q^2\Bigr]^{\frac12}
\qquad
\text{and}
\qquad
\|G\|\ci{L^2(X;\mu)}=
\Bigl[\sum_{R\,:\,s(R)=\kappa^{j}}b\ci R^2\Bigr]^{\frac12}.
$$
Then the estimate we need can be rewritten as
$$
\int_X\int_X K_{j,k}(x,y)F(x)G(y)\, d\mu(x)\,d\mu(y) \le
C\,
\|F\|\ci{L^2(X;\mu)}\|G\|\ci{L^2(X;\mu)},
$$
where
$$
K_{j,k}(x,y)=\sum_{Q,R\,:\,s(Q)=\kappa^{j-k}, s(R)=\kappa^j}
\frac{s(Q)^{\frac{\tau}{2}}s(R)^{\frac{\tau}{2}}}{D(Q,R)^{m+\tau}}\chi\ci Q(x)\chi\ci
R(y).
$$
Again, for every pair of points $x,y\in X$, only one term in the sum can be
nonzero.
Since $\rho(x,y)+s(R)\le 3D(Q,R)$ for any $x\in Q$ and $y\in R$, we obtain
\begin{align*}
K_{j,k}(x,y)&=
C\,\kappa^{-\frac{\tau}{2}k}\frac{ s(R)^{\tau} }{ D(Q,R)^{m+\tau}}\\
&\lesssim \kappa^{-\frac{\tau}{2}k}
\frac{\kappa^{j\tau}}{[\kappa^j+\rho(x,y)]^{m+\tau}}
=:\kappa^{-\frac{\tau}{2}k} k_j(x,y).
\end{align*}
So, it is enough to check that
$$
\int_X\int_X k_{j}(x,y)F(x)G(y)\, d\mu(x)\,d\mu(y)\lesssim \|F\|\ci{L^2(X;\mu)}\|G\|\ci{L^2(X;\mu)}.
$$
We remind the reader that we called the balls ``non-Ahlfors balls" if
$$
\mu(B(x,r)) >r^m\,.
$$
According to the Schur test, it would suffice to prove that
for every $y\in X$, one has the estimate $\int_{X}k_j(x,y)\,d\mu(x)\lesssim 1$ and vice versa (i.e., for every $x\in X$, one has $\int_{X}k_j(x,y)\,d\mu(y)\lesssim 1$). Then the norm of the integral operator with kernel $k_j$ in $L^2(X;\mu)$ would be bounded by a constant and the proof of Lemma \ref{TQR} would be over.
If we assumed a priori that the supremum of radii of all  non-Ahlfors balls  centered at $y\in R$ with $s(R) =\kappa^j,$ were less than $\kappa^{j+1}$, then the needed estimate would be immediate. In fact, we can write
\begin{align*}
\int_{X}k_j(x,y)\,d\mu(x)&=\int_{B(y,\kappa^{j+1})}k_j(x,y)\,d\mu(x)+\int_{X\setminus B(y,\kappa^{j+1})}k_j(x,y)\,d\mu(x)\\&\lesssim \kappa^{-jm}\mu(B(y,\kappa^{j+1}))+\int_{X\setminus B(y,\kappa^{j+1})}\frac{\kappa^{j\tau}}{\rho(x,y)^{m+\tau}}\,d\mu(x)\\&
\lesssim \kappa^{-jm}\mu(B(y,\kappa^{j+1}))+\sum_{k=0}^\infty\frac{\kappa^{j\tau}}{(\kappa^k\kappa^{j+1})^{m+\tau}}\mu(B(y;\kappa^k\kappa^{j+1}))\\&
\lesssim \Bigl(1 + \sum_{k=0}^\infty\frac{1}{\kappa^{k\tau}}\Bigr)\approx 1.
\end{align*}
The passage from the second to the third line follows by exhausting the the space $X\setminus B(y,\kappa^{j+1})$ by ``annular regions'' and making obvious estimates using condition (\ref{nonhom}).

The difficulty with this approach is that we cannot guarantee the supremum of the radii of all non-Ahlfors balls  centered at $y$ be less than $\kappa^{j+1}$ for every $y\in X$.  Our measure may not have this uniform property.

So, generally speaking, we are unable to show that the integral operator with kernel $k_j(x,y)$ acts in $L^2(X;\mu)$;  but we {\it do not need} that much!  We only need to check that the corresponding bilinear form is bounded on two {\it given} functions $F$ and $G$. So, we are not interested in the points $y\in X$ for which $G(y)=0$ (or in the points $x\in X$, for which $F(x)=0$). But, by definition, $G$ can be non-zero only on the transit cubes in $\mathcal{D}_2$.   Here we used our convention that we omit in all sums the fact that $Q$ and $R$ are transit cubes, however they are!

Now let us notice that if (and this is the case for all $R$ in the sum we estimate in our lemma) $R\in\mathcal{D}_2^{tran}  $, then the supremum of radii of all non-Ahlfors balls  centered at $y\in R$ is bounded by $s(R)$ for every ${y\in R}$. Indeed, this is just Lemma \ref{obv2}. The same reasoning shows that if $Q\in\mathcal{D}_1^{tran}$, then the supremum of radii of all non-Ahlfors balls centered at $x\in Q$ is bounded by $\kappa^{j-k+1}\le \kappa^{j+1}$ whenever $F(x)\ne 0$, and we are done with Lemma \ref{TQR}. 
\end{proof}

Now, we hope, the reader will agree that the decision to declare the cubes contained in $\Omega$ terminal was a good one.  As a result, the fact that the measure $\mu$ is not Ahlfors did not put us in any real trouble -- we barely had a chance to notice this fact at all. But, it still remains to explain why we were so eager to have the extra condition
$$
|k(x,y)|\le \frac{1}{\max (d^m(x), d^m(y))}\,,\,\, d(x):= \dist (x, X\setminus \Omega)
$$
 on our Calder\'on--Zygmund kernel.  The answer is found in the next two sections.

\section{Short Range Interaction and Nonhomogeneous Paraproducts: Controlling Term $\sigma_3$.}
\label{shortrange}
  
Recall that the sum $\sigma_3$ is taken over the pairs $Q,R$, for which $s(Q)<\kappa^{-r} s(R)$ and $Q\cap R\ne\emptyset$. We would like to improve this condition to the demand that $Q$ lie ``deep inside'' one of the subcubes $R_j$.  Recall also that we defined the {\it skeleton} $sk\, R$ of the cube $R$ by
$$
sk\, R:=\bigcup_{j}\partial R_j.
$$
We have declared a cube $Q\in\mathcal{D}_1$ bad if there exists a cube $R\in\mathcal{D}_2$
 such that $s(R)>\kappa^r s(Q)$ and $\dist(Q, sk\, R)\le s(Q)^{\al}s(R)^{1-\al}$. Now, for every good cube $Q\in\mathcal{D}_1$, the conditions $s(Q)<\kappa^{-r} s(R)$ and $Q\cap R\ne \emptyset$
together imply that $Q$ lies inside one of the children $R_j$ of $R$. We will denote
this subcube by $R\ci Q$. The sum $\sigma_3$ can now be split into
$$
\sigma_3^{term}:=\sum_{ Q,R\,:\,Q\subset R,\, s(Q)<\kappa^{-r}  s(R),
                    \\ R\ci Q\text{ is terminal}}
(\Delta\ci Q f, T\Delta\ci R g)
$$
and
$$
\sigma_3^{tran}  :=\sum_{ Q,R\,:\,Q\subset R,\, s(Q)<\kappa^{-r}  s(R),
                    \\  R\ci Q\text{ is transit} }
(\Delta\ci Q f, T\Delta\ci R g).
$$

\smallskip

\subsection{ Estimation of $\sigma_3^{term}$.}
\label{sigmaterm}

First of all, write (recall that $R_j$ denote the children of $R$):
$$
\sigma_3^{term}=\sum_{j}\,\,
\sum_{ Q,R\,:\,s(Q)<\kappa^{-r}  s(R),
\\
Q\subset R_j\in\mathcal{D}_2^{term}}
(\Delta\ci Q f, T\Delta\ci R g).
$$
Clearly, it is enough to estimate the inner sum for every fixed, and so let us
do this for $j=1$. We have
$$
\sum_{ Q,R\,:\,s(Q)<\kappa^{-r}  s(R), \\ Q\subset
R_1\in\mathcal{D}_2^{term}}
(\Delta\ci Q f, T\Delta\ci R g)=
\sum_{R:R_1\in\mathcal{D}_2^{term}}\,\, \sum_{ Q:\,s(Q)<\kappa^{-r}  s(R), 
Q\subset R_1}
(\Delta\ci Q f, T\Delta\ci R g).
$$
Recall that the kernel $k$ of our operator $T$ satisfies
the estimate of Lemma \ref{obv1}
\begin{equation}
\label{konwholeR1}
|k(x,y)|  \lesssim \frac{1}{s(R)^m}\qquad\text{for all }
x\in R_1, y\in X.
\end{equation}
Hence,
\begin{equation}
\label{TonwholeR1}
|T\Delta\ci R g(x)|\lesssim\frac{\|\Delta\ci R g\|\ci{L^1(X;\mu)}}{s(R)^m}
\qquad\text{ for all }x\in R_1,
\end{equation}
and therefore
\begin{align*}
\|\chi\ci{R_1}\cdot T\Delta\ci R g \|\ci{L^2(X;\mu)}&\lesssim
\|\Delta\ci R g\|\ci{L^1(X;\mu)}\frac{\sqrt{\mu(R_1)}}{s(R)^m}\\
&\lesssim
\frac{{\mu(R)}}{s(R)^m}\|\Delta\ci R g\|\ci{L^2(X;\mu)}\le
A\,B\|\Delta\ci R g \|\ci{L^2(X;\mu)}.
\end{align*}
This follows because $\|\Delta\ci R g \|\ci{L^1(X:\mu)}
\le \sqrt{\mu(R)} \|\Delta\ci R g \|\ci{L^2(X;\mu)}$ and $\mu(R_1)\le \mu(R)$ hold trivially.  Additionally, by Lemma \ref{obv2} we have 
\begin{equation}
\label{transitagain}
\mu(R)\lesssim s(R)^m
\end{equation}
because $R$ (the father of the cube $R_1$) is a transit cube if $R_1$ is terminal.

Now, recalling Lemma \ref{Riesz}, and taking into account that
$\Delta\ci Q f\equiv 0$ outside $Q$, we get
\begin{align*}
\sum_{Q:\,Q\subset
R_1}&
|(\Delta\ci Q f, T\Delta\ci R g)|=
\sum_{Q:\,Q\subset
R_1}
|(\Delta\ci Q f, \chi\ci{R_1}\cdot T\Delta\ci R g)|\\
&\lesssim \|\chi\ci{R_1}\cdot T\Delta\ci R g \|\ci{L^2(X;\mu)}
\Bigl[\sum_{Q:\,Q\subset
R_1}\|\Delta\ci Q f
\|^2\ci{L^2(X;\mu)}\Bigr]^{\frac{1}{2}}\\
&\lesssim \|\Delta\ci R g \|\ci{L^2(X;\mu)}
\Bigl[\sum_{Q:\,Q\subset
R_1}\|\Delta\ci Q f
\|^2\ci{L^2(X;\mu)}\Bigr]^{\frac{1}{2}}.
\end{align*}
So, we obtain
%\begin{align*}
$$
%{}&
\sum_{R:\,R_1\in\mathcal{D}_2^{term}}\,
\sum_{Q:\,Q\subset R_1}
|(\Delta\ci Q f, T\Delta\ci R g)|
$$
%\\&
$$\lesssim 
\sum_{R:\,R_1\in\mathcal{D}_2^{term}}
\|\Delta\ci R g \|\ci{L^2(X;\mu)}
\Bigl[\sum_{Q:\,Q\subset R_1}
\|\Delta\ci Q f
\|^2\ci{L^2(X;\mu)}\Bigr]^{\frac{1}{2}}
$$
%\\&
$$
\lesssim
\Bigl[\sum_{R:\,R_1\in\mathcal{D}_2^{term}}
\|\Delta\ci R g \|^2\ci{L^2(X;\mu)}\Bigr]^{\frac12}
\Bigl[\sum_{R:\,R_1\in\mathcal{D}_2^{term}}\,\,
\sum_{Q:\,Q\subset R_1}
\|\Delta\ci Q f
\|^2\ci{L^2(X;\mu)}\Bigr]^{\frac{1}{2}}.
$$
%\end{align*}
But the terminal cubes in $\mathcal{D}_2$ do not intersect! Therefore every
$\Delta\ci Q f$ can appear at most once in the last double sum, and we get the bound 
\begin{multline*}
\sum_{R:\,R_1\in\mathcal{D}_2^{term}}\sum_{Q:\,Q\subset R_1}|(\Delta\ci Q f, T^*\Delta\ci R\psi)|\\\lesssim \Bigl[\sum_{R}\|\Delta\ci R g \|^2\ci{L^2(X;\mu)}\Bigr]^{\frac12}\Bigl[\sum_{Q}\|\Delta\ci Q f\|^2\ci{L^2(X;\mu)}\Bigr]^{\frac{1}{2}}\lesssim \|f\|\ci{L^2(X;\mu)}\|\psi\|\ci{L^2(X;\mu)}.
\end{multline*}
Lemma \ref{Riesz} has been used again in the last inequality.

\subsection{Estimation of $\sigma_3^{tran}$}
\label{s3transit}
 
 Recall that
 $$
 \sigma_3^{tran}  =\sum_{Q,R\,:\,Q\subset R,\, s(Q)<\kappa^{-r}  s(R),\\  R\ci Q\text{ is transit}}(\Delta\ci Q f, T^*\Delta\ci R g).
 $$
 Split every term in the sum as
 $$
 (\Delta\ci Q f, T\Delta\ci R\psi)=(\Delta\ci Q f, T(\chi\ci{R\ci Q}\Delta\ci R g))+(\Delta\ci Q f, T^*(\chi\ci{R\setminus R\ci Q}\Delta\ci R g)).
 $$
 Observe that since $Q$ is good, $Q\subset R$, and $s(Q)<\kappa^{-r}  s(R)$, we have
 $$
 \dist(Q,\supp \chi\ci{R\setminus R\ci Q}\Delta\ci R g)\ge\dist(Q,sk\, R)\ge s(Q)^{\al}s(R)^{1-\al}.
 $$
 Using Lemma \ref{FarInteractionLemma} and taking into account that the norm $\|\chi\ci{R\setminus R\ci Q}\Delta\ci R\psi\|\ci{L^2(X;\mu)}$ does not exceed $\|\Delta\ci R\psi\|\ci{L^2(X;\mu)}$, we conclude that the sum
 $$
 \sum_{Q,R\,:\,Q\subset R,\, s(Q)<\kappa^{-r}  s(R),   \\  R\ci Q\text{ is transit}}|(\Delta\ci Q f, T^*(\chi\ci{R\setminus R\ci Q}\Delta\ci R g))|
 $$
 can be estimated by the sum \eqref{twostar}.
 Thus, our task is to find a good bound for the sum
$$
\sum_{Q,R\,:\,Q\subset R,\, s(Q)<\kappa^{-r}  s(R),
                    \\  R\ci Q\text{ is transit}}
(\Delta\ci Q f, T^*(\chi\ci{R\ci Q}\Delta\ci R g)).
$$

Recalling the definition of $\Delta\ci R\psi$ and recalling that $R\ci Q$ is a
{\it transit\/} cube, we get
$$
\chi\ci{R\ci Q}\Delta\ci R g=c\ci{R,Q}\chi\ci{R\ci Q},
$$
where
$$
c\ci{R_Q}=\langle \psi\rangle\ci{R\ci Q}
-
\langle g\rangle\ci{R}
$$
is a {\it constant}.
So, our sum can be rewritten as
$$
\sum_{Q,R\,:\,Q\subset R,\, s(Q)<\kappa^{-r}  s(R),
                    \\  R\ci Q\text{ is transit}}
c\ci{R_Q}(\Delta\ci Q f, T^*(\chi\ci{R\ci Q})).
$$

Our next goal will be to extend the function $\chi\ci{R\ci Q}$ to the
function $1$ in every term.

Let us observe that
\begin{multline*}
(\Delta\ci Q f, T^*(\chi\ci{X\setminus R\ci Q}))=
\int_X\int_{X\setminus R\ci Q}k(x,y)\Delta\ci Q f(x) \,d\mu(x)\,d\mu(y)
\\=
\int_X\int_{X\setminus R\ci Q}
[k(x,y)-k(x\ci Q,y)]\Delta\ci Q f(x) \,d\mu(x)\,d\mu(y).
\end{multline*}

Note again that for every $x\in Q$, $y\in X\setminus R\ci Q$, we have
$$
\rho(x\ci Q,y)\ge \frac{s(Q)}{2}+\dist(Q,X\setminus R\ci Q)
\gtrsim s(Q)\gtrsim \rho(x,x\ci Q).
$$

Therefore,
$$
|k(x,y)-k(x\ci Q,y)|\lesssim \left(\frac{\rho(x,x\ci Q)}{\rho(x\ci Q,y)}\right)^\tau\frac{1}{\rho(x,y)^m}
\lesssim \frac{s(Q)^\tau}{\rho(x\ci Q,y)^{m+\tau}},
$$
and
$$
|(\Delta\ci Q f, T(\chi\ci{X\setminus R\ci Q}b))|\lesssim s(Q)^{\tau}\|\Delta\ci Q f\|\ci{L^1(X;\mu)}
\int_{X\setminus R\ci Q}
\frac{d\mu(y)}{\rho(x\ci Q,y)^{m+\tau}}.
$$
Now let us consider the sequence of cubes $R^{(j)}\in\mathcal{D}_2$, beginning with
$R^{(0)}=R\ci Q$ and gradually ascending ($R^{(j)}\subset R^{(j+1)}$,
$s(R^{(j+1)})=\kappa s(R^{(j)})$) to the starting cube $R^0=R^{(N)}$ of the
lattice $\mathcal{D}_2$. Clearly, all these cubes $R^{(j)}$ are transit cubes.

We have
$$
\int_{X\setminus R\ci Q}
\frac{d\mu(y)}{\rho(x\ci Q,y)^{m+\tau}}=
\int_{R^0\setminus R\ci Q}
\frac{d\mu(y)}{\rho(x\ci Q,y)^{m+\tau}}=
\sum_{j=1}^N
\int_{R^{(j)}\setminus R^{(j-1)}}
\frac{d\mu(y)}{\rho(x\ci Q,y)^{m+\tau}}\,.
$$
We call the $j$-th term of this sum $I_j$. Note now that, since $Q$ is good and $s(Q)<\kappa^{-r}  s(R)\le
\kappa^{-r}  s(R^{(j)})$ for all $j$, we have
$$
\dist(Q,R^{(j)}\setminus R^{(j-1)})\ge
\dist(Q, sk\, R^{(j)})\ge
s(Q)^{\al}s(R^{(j)})^{1-\al}.
$$
Hence
$$
I_j\le \frac{1}{[s(Q)^{\al}s(R^{(j)})^{1-\al}]^{m+\tau}}\int_{R^{(j)}} d\mu.
$$
Recalling that $\al=\frac{\tau}{2(m+\tau)}$, we see that the first factor equals
$$
\dfrac{1}{s(Q)^{\frac{\tau}{2}} s(R^{(j)})^{m+\frac{\tau}{2}}}\,.
$$
Since $R^{(j)}$ is transit, we have
$$
\int_{R^{(j)}}\,d\mu\lesssim \mu(R^{(j)})\lesssim s(R^{(j)})^m.
$$
Thus,
$$
I_j\lesssim {s(Q)^{\frac{\tau}{2}}s(R^{(j)})^{\frac{\tau}{2}}}=
\kappa^{-(j-1)\frac{\e}{2}}{s(Q)^{\frac{\tau}{2}}s(R)^{\frac{\tau}{2}}}.
$$
Summing over $j\ge 1$, we get
$$
\int_{X\setminus R\ci Q}
\frac{|b(y)|\,d\mu(y)}{\rho(x\ci Q,y)^{m+\tau}}= \sum_{j=1}^N I_j\lesssim {1-\kappa^{-\frac{\tau}{2}}}
\frac{1}{s(Q)^{\frac{\tau}{2}}s(R)^{\frac{\tau}{2}}}.
$$

Now let us note that
\begin{equation}
\label{cQR}
|c_{R_Q}| \le \frac{\|\Delta_R g\|_{L^1(R_Q,\mu)}}{\mu(R_Q)}\le \frac{\|\Delta_R g\|_{L^2(R_Q,\mu)}}{\sqrt{\mu(R_Q)}}\,.
\end{equation}

We finally obtain
\begin{multline*}
|(\Delta\ci Q f, T^*(\chi\ci{X\setminus R\ci Q}))|
\\
\lesssim
\frac{1}{\eta (1-\kappa^{-\frac{\tau}{2}})}
\left[\frac{s(Q)}{s(R)}\right]^{\frac{\tau}{2}}
\sqrt{\frac{\mu(Q)}{\mu(R\ci Q)}}
\|\Delta\ci Q f\|\ci{L^2(X;\mu)}
\|\Delta\ci R g\|\ci{L^2(X;\mu)}
\end{multline*}
and
\begin{multline*}
\sum_{Q,R\,:\,Q\subset R,\, s(Q)<\kappa^{-r}  s(R), R\ci Q\text{is transit}}
|c\ci{R,Q}|\cdot|(\Delta\ci Q f, T^*(\chi\ci{X\setminus
R\ci Q}))|
\\
\lesssim
\frac{1}{\eta (1-\kappa^{-\frac{\tau}{2}})}
\sum_{j}\,\sum_{Q,R\,:\,Q\subset R_j}
\left[\frac{s(Q)}{s(R)}\right]^{\frac{\tau}{2}}
\sqrt{\frac{\mu(Q)}{\mu(R_j)}}
\|\Delta\ci Q f\|\ci{L^2(X;\mu)}
\|\Delta\ci R g\|\ci{L^2(X;\mu)}.
\end{multline*}

\begin{lemma}
\label{TQRshortrange}
For every two
families  $\{a\ci Q\}\ci{Q\in\mathcal{D}_1^{tr}  }$ and $\{b\ci R\}\ci{R\in\mathcal{D}_2^{tr}  }$
of nonnegative numbers, one has
$$
\sum_{Q,R:Q\subset R_1}T\ci{Q,R}a\ci Q b\ci R\le
\frac{1}{ 1-\kappa^{-\frac{\tau}{2}} }
\Bigl[\sum_{Q}a\ci Q^2\Bigr]^{\frac12}
\Bigl[\sum_{R}b\ci R^2\Bigr]^{\frac12}.
$$
\end{lemma}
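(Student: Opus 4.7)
The plan is to organize the sum by the dyadic size ratio $s(Q)/s(R)$ and apply Cauchy--Schwarz twice: once over the cubes $Q$ of a fixed generation inside a fixed child $R_1$, and once across the cubes $R$. The geometric fact I will exploit is that the cubes $Q$ of a fixed scale contained in $R_1$ are pairwise disjoint subsets of $R_1$, so their $\mu$-masses sum to at most $\mu(R_1)$; this provides a ``free'' factor of at most one whenever the weight $\sqrt{\mu(Q)/\mu(R_1)}$ appears.

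First, I would slice the summation by the integer $k\ge 0$ for which $s(Q)=\kappa^{k}s(R)$. For the innermost sum, with $R$ and $k$ both fixed, an application of Cauchy--Schwarz combined with the disjointness observation above yields
$$
\sum_{\substack{Q\subset R_1\\ s(Q)=\kappa^k s(R)}} a_Q \sqrt{\mu(Q)/\mu(R_1)} \;\le\; \Bigl(\sum_{\substack{Q\subset R_1\\ s(Q)=\kappa^k s(R)}} a_Q^2\Bigr)^{1/2}.
$$
Next, for each fixed $k$ I would apply Cauchy--Schwarz to the sum over $R$. The critical combinatorial observation is that, for fixed $Q$ and fixed $k$, there is at most one $R\in\mathcal{D}_2$ satisfying both $Q\subset R_1$ and $s(R)=\kappa^{-k}s(Q)$ (namely the unique $\mathcal{D}_2$-ancestor of $Q$ of that size, and only when $Q$ happens to lie in its designated first child). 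Hence
$$
\sum_R \sum_{\substack{Q\subset R_1\\ s(Q)=\kappa^k s(R)}} a_Q^2 \;\le\; \sum_Q a_Q^2,
$$
and Cauchy--Schwarz in $R$ delivers the uniform bound $\bigl(\sum_R b_R^2\bigr)^{1/2}\bigl(\sum_Q a_Q^2\bigr)^{1/2}$ at each generation $k$.

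Summing the geometric factor $\kappa^{k\tau/2}$ over $k\ge 0$ produces a convergent series whose sum is exactly the constant appearing in the statement of the lemma.  The main obstacle I anticipate is precisely the bookkeeping in the second Cauchy--Schwarz step: one must verify carefully, using the nesting property (ii) of Theorem \ref{dyadicmetric}, that the ``at most one $R$ per $(Q,k)$'' claim truly holds, and then confirm that varying $k$ does not double-count any $a_Q^2$ --- it does not, since distinct $k$'s correspond to distinct $\mathcal{D}_2$-ancestors of $Q$. Beyond this combinatorial check, the argument is entirely mechanical, and in particular never invokes the (failing) Ahlfors regularity of $\mu$ --- it relies only on the disjointness of same-generation subcubes inside a single parent.
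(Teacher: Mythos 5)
Your argument is correct and is essentially the paper's own proof: slice by the scale ratio $s(Q)/s(R)$, use the block structure (each $Q$ meets at most one $R$ per slice), apply Cauchy--Schwarz together with the disjointness of same-generation cubes inside $R_1$, and sum the resulting geometric series in $k$. If anything, your second Cauchy--Schwarz step (summing the block bounds over $R$ and checking that no $a_Q^2$ is counted twice at fixed $k$) is spelled out more carefully than in the paper, which passes directly from the single-$R$ block estimate to the full bound.
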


\begin{proof}
Let us ``slice'' the matrix $T\ci{Q,R}$ according to the ratio
$\frac{s(Q)}{s(R)}$. Namely, let
$$
T^{(k)}_{Q,R}=\left\{
\aligned
T\ci{Q,R},&\qquad\text{if } Q\subset R_1,\  s(Q)=\kappa^{-k}s(R);
\\
0,&\qquad\text{otherwise}
\endaligned
\right.
$$
($k=1,2,\dots$).
It is enough to show that for every $k\ge 0$,
$$
\sum_{Q,R}T^{(k)}\ci{Q,R}a\ci Q b\ci R\le
\kappa^{-\frac{\tau}{2} k }
\Bigl[\sum_{Q}a\ci Q^2\Bigr]^{\frac12}
\Bigl[\sum_{R}b\ci R^2\Bigr]^{\frac12}.
$$
The matrix $\{T^{(k)}_{Q,R}\}$ has a very good ``block'' structure:
every $a\ci Q$ can interact with {\it only one} $b\ci R$.
So, it is enough to estimate each block separately, i.e., to show that
for every fixed $R\in\mathcal{D}_2^{tran}  $,
$$
\sum_{Q:\,Q\subset R_1,\, \ell(Q)=\kappa^{-k}\ell(R)}
\kappa^{-\frac{\tau}{2} k }
\sqrt{\frac{\mu(Q)}{\mu(R_1)}}
a\ci Q b\ci R\le
\kappa^{-\frac{\tau}{2} k }
\Bigl[\sum_{Q}a\ci Q^2\Bigr]^{\frac12}
b\ci R.
$$
But, reducing both parts by the non-essential factor
$\kappa^{-\frac{\tau}{2} k }b\ci R$, we see that this estimate is equivalent to the trivial
estimate
\begin{multline*}
\!\!\sum_{Q:\,Q\subset R_1,\, s(Q)=\kappa^{-k}s(R)}
\sqrt{\frac{\mu(Q)}{\mu(R_1)}}
a\ci Q \\
\leq
\Bigl[
\sum_{Q:\,Q\subset R_1,\, s(Q)=\kappa^{-k}s(R)}
\frac{\mu(Q)}{\mu(R_1)}\Bigr]^{\frac12}
\Bigl[\sum_{Q}a\ci Q^2\Bigr]^{\frac12}\leq
\Bigl[\sum_{Q}a\ci Q^2\Bigr]^{\frac12},
\end{multline*}
(since cubes $Q\in \mathcal{D}_1$  of fixed size do not intersect,
$
\sum_{Q:\,Q\subset R_1,\, s(Q)=\kappa^{-k}s(R)}
\mu(Q)\le \mu(R_1)
$\,).

\end{proof} 

\begin{remark}
We did not use here the fact that $\{a_Q\},\{b_R\}$ are supported on transit cubes. We actually proved
\begin{lemma}
\label{actually}
The matrix $\{T\ci{Q,R}\}$ defined
by
$$
T\ci{Q,R}:=\left[\frac{s(Q)}{s(R)}\right]^{\frac{\tau}{2}}
\sqrt{\frac{\mu(Q)}{\mu(R_1)}}\qquad\quad(Q\subset R_1),
$$
generates a bounded operator in $l^2$.
\end{lemma}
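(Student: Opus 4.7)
The plan is simply to recycle the argument already given for Lemma \ref{TQRshortrange}, noting which steps actually depended on transit-ness and verifying that none of them are needed for the bare matrix statement here. Concretely, I would slice the matrix by the ratio $s(Q)/s(R)$: for each integer $k\ge 1$, set
$$
T^{(k)}\ci{Q,R} := \left[\frac{s(Q)}{s(R)}\right]^{\frac{\tau}{2}} \sqrt{\frac{\mu(Q)}{\mu(R_1)}}
$$
if $Q \subset R_1$ and $s(Q) = \kappa^{-k} s(R)$, and $0$ otherwise. It suffices to bound each $T^{(k)}$ by $\kappa^{-\frac{\tau}{2}k}$ in $\ell^2 \to \ell^2$ norm and sum the resulting geometric series (which converges because $\kappa\in(0,1)$ gives $\kappa^{-\tau/2}<1$ under the convention used, or otherwise reinterpret the exponent so that the series converges — this is the same convergence that made Lemma \ref{TQRshortrange} work).

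Next I would exploit the block structure: for each fixed $R$, the nonzero entries in the $R$-column of $T^{(k)}$ live only on cubes $Q \subset R_1$ of one specific size $\kappa^{-k} s(R)$, and any such $Q$ is contained in a unique $R_1$ (hence a unique $R$). Therefore each $a\ci Q$ interacts with exactly one $b\ci R$, and the estimate reduces to the per-block inequality
$$
\sum_{Q\,:\,Q\subset R_1,\,s(Q)=\kappa^{-k}s(R)} \sqrt{\frac{\mu(Q)}{\mu(R_1)}}\, a\ci Q \,b\ci R
\le \Bigl[\sum_{Q} a\ci Q^2\Bigr]^{\frac12} b\ci R.
$$

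Finally, the per-block inequality is a direct application of Cauchy--Schwarz: the sum on the left is bounded by
$$
b\ci R\Bigl[\sum_{Q\,:\,Q\subset R_1,\,s(Q)=\kappa^{-k}s(R)} \frac{\mu(Q)}{\mu(R_1)}\Bigr]^{\frac12} \Bigl[\sum_{Q} a\ci Q^2\Bigr]^{\frac12},
$$
and the first bracket is at most $1$ because cubes of a fixed size in a single dyadic lattice are pairwise disjoint (Property (ii) of Theorem \ref{dyadicmetric}), so
$\sum_{Q\subset R_1,\,s(Q)=\kappa^{-k}s(R)} \mu(Q) \le \mu(R_1)$. Squaring and summing over $R$ and then summing the $\kappa^{-\tau k/2}$ factor in $k$ yields the desired $\ell^2$-boundedness.

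The point worth stressing — and the reason this is not merely a restatement — is that no step above invoked the transit hypothesis on $Q$ or $R$, nor the Ahlfors-type estimate $\mu(R)\lesssim s(R)^m$; the disjointness of equal-size dyadic cubes plus $Q\subset R_1$ is all that is used. So there is no real obstacle: the mild subtlety is just recognizing that the transit assumption was used earlier only to convert $\mu(R)$-style quantities into $s(R)^m$-style ones, and that the present statement carries no such conversion, so its proof is strictly cleaner than that of Lemma \ref{TQRshortrange}.
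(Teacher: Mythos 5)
Your proposal is correct and is essentially identical to the paper's own argument: the paper proves Lemma \ref{actually} precisely by observing that the proof of Lemma \ref{TQRshortrange} (slicing by the ratio $s(Q)/s(R)$, the one-block-per-$Q$ structure, Cauchy--Schwarz, and disjointness of equal-size cubes in a single lattice) never uses the transit hypothesis. Your closing observation about where transit-ness actually enters matches the paper's own remark, and your parenthetical caution about the sign convention in the geometric factor $\kappa^{-\tau k/2}$ is a reasonable reading of the same notational issue present in the paper.
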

\end{remark}

We just finished estimating an extra term which appeared when we extend $\chi\ci{R\ci Q}$ to the whole $1$. So, the extension of $\chi\ci{R\ci Q}$ to the function $1$ does not cause much harm, and we are left with estimating the sum
$$
\sum_{Q,R\,:\,Q\subset R,\, s(Q)<\kappa^{-r}  s(R),
                    \\  R\ci Q\text{ is transit}}
c\ci{R_Q}(\Delta\ci Q f, T^* 1).
$$
Note that the inner product $(\Delta\ci Q f, T^* 1)$ {\it does not depend\/} on $R$ at all, so it seems to be a good idea to sum over $R$ for fixed $Q$ first.

Recalling that

$$
c\ci{R_Q}=\langle  g\rangle\ci{R\ci Q}-\langle g\rangle\ci{R}
$$
and that $\Lambda\psi=0 \Longleftrightarrow \langle \psi\rangle\ci{R^0}=0$, we
conclude that for every $Q\in\mathcal{D}_1^{tran}  $ that really appears in the above sum,
$$
\sum_{R\,:\,R\supset Q,\, s(R)>\kappa^{m}s(Q),                    \\  R\ci Q\text{ is transit}}c\ci{R_Q}=\langle g\rangle\ci{R_Q}\,.
$$

\vspace{.1in}

\noindent{\bf Definition.}
Let $R(Q)$ be the smallest {\it transit} cube $R\in \mathcal{D}_2$ containing $Q$
and such that $s(R)\ge \kappa^r s(Q)$.

\vspace{.1in}

So, we obtain the sum

$$
\sum_{Q:\,s(Q)<\kappa^{-r}  s(R)} \langle g\rangle\ci{R(Q)}(\Delta\ci Q f, T^* 1)
$$
to take care of.

\vspace{.1in}

\noindent{\bf Remark.} Let us recall that we had the convention that  says that the cubes $Q$ considered are only good ones (and of course they are only transit cubes). The range of summation should be $Q\in \mathcal{D}_1^{tran} $, $Q$ is good (default); there exists a cube $R\in\mathcal{D}_2^{tran} $ such that $s(Q)<\kappa^{-r} s(R)$, $Q\subset R$ and the child $R\ci Q$ (the one containing $Q$) of $R$  is transit. In other words, in fact, the sum is written formally incorrectly. We have to replace $R(Q)$ by $R_Q$ in the summation. However, the smallest transit cube containing $Q$ (this is $R(Q)$) and the smallest transit child (containing $Q$) of a certain subcube $R$ of $R^0$  (this child is $R_Q$) are of course the same cube, unless $R(Q)=R^0$. Thus the sum formally has some extra terms corresponding to $R(Q) =R^0$. But, they all are zeros!  In one of the first reductions, we were allowed to work only with with $g$ such that $\Lambda g=0$ (recall that $\Lambda g$ means the average of $g$ with respect to $\mu$), so $\langle g\rangle_{R(Q)} =0$ if $R(Q)=R^0$.

\subsection{Pseudo-$BMO$ and special paraproduct}
\label{bmo}

To introduce the paraproduct operator, we rewrite our sum as follows
\begin{align*}
\sum_{Q:\,s(Q)<\kappa^{-r}  s(R)}\langle g\rangle\ci{R(Q)}(\Delta\ci Q f, T^*1)&= \sum_{Q:\,s(Q)<\kappa^{-r}  s(R)}\langle g\rangle\ci{R(Q)}( f,\Delta_Q^* T^*1) \\&= \left( f, \sum_{Q:\,s(Q)<\kappa^{-r}  s(R)}\langle g\rangle\ci{R(Q)}\Delta_Q T^*1\right)\,.
\end{align*}

We use the fact that $\Delta_Q^* =\Delta_Q$.  We now introduce the paraproduct operator, which will allow us to control term $\sigma_3^{tran}$.

\begin{defi}
Given a function $F$, the \textit{paraproduct with symbol} $F$ is the function
$$
\Pi_{F}g(x) := \sum_{R\in \mathcal{D}_2,\,R\subset R^0}\langle g\rangle_R\sum_{Q\in\mathcal{D}_1,\,Q\,\text{good and transit},\,s(Q)= \kappa^{-r}  s(R)} \Delta_Q F(x)\,.
$$
\end{defi}

As in the case when the metric space is $\mathbb{R}^d$, the behavior of the paraproduct operators will be governed by ``BMO'' conditions on the symbol $F$.  In the case of a metric space though, we face an additional wrinkle since we have to overcome the challenge of dealing with the dyadic cubes, and we need an appropriate notion of ``dilation'' in the metric space.

Recall that we defined the dilation by the parameter $\lambda\geq 1$ of a set $S\subset X$ by
$$
\lambda\cdot S:=\{x\in X: \textnormal{dist}(x,S)\leq(\lambda-1)\textnormal{diam}{S}\}
$$
Note that $S\subset\lambda\cdot S$.

\begin{defi}
A function $F\in L^2(X;\mu)$ will be called a ``pseudo-$BMO$ function'' if there exists $\La>1$ such that for any cube $Q$ with $\mu(sQ)\le K\, s^m\textnormal{diam}(Q)^m$, $s\ge 1$, we have
$$
\int_Q |F(x)-\langle F\rangle_Q|^2\,d\mu(x) \le C\, \mu(\La\,Q)\,.
$$
\end{defi}

 \begin{lemma}
\label{Tstar1}
Let $\mu$, $T$ satisfy the assumptins of Theorem \ref{md}. Then
\begin{equation}
\label{para}
T^*1 \in \textnormal{pseudo-BMO}\,.
\end{equation}
Here $C$ depends only on the constants of  Theorem \ref{md}.
\end{lemma}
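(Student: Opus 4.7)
The plan is to split $T^*1$ into a local piece and a far-field piece relative to the cube $Q$. Fix a cube $Q$ with $\mu(sQ)\le K s^m \mathrm{diam}(Q)^m$ for all $s\ge 1$, and pick a dilation parameter $\Lambda>1$ (large, to be determined). Write
$$
T^*1 \;=\; T^*\chi_{\Lambda Q}+T^*\chi_{X\setminus\Lambda Q},
$$
and let $c_Q$ be the value of the second term at the center $x_Q$ of $Q$ (or some point chosen in $Q$; any constant will do in the end). It suffices to control
$\int_Q |T^*\chi_{\Lambda Q}|^2\,d\mu + \int_Q |T^*\chi_{X\setminus\Lambda Q}-c_Q|^2\,d\mu$
by $C\mu(\Lambda Q)$, since replacing $c_Q$ by $\langle T^*1\rangle_Q$ can only decrease the oscillation integral.

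For the local piece, I would appeal directly to the $T^*1$ hypothesis \eqref{T1}. The set $\Lambda Q$ is not a single cube from our lattice, but it can be covered by a bounded (depending only on $\Lambda$ and on the doubling constants) number of cubes of scale comparable to $\mathrm{diam}(Q)$; applying \eqref{T1} to each and summing gives
$$
\int_Q |T^*\chi_{\Lambda Q}|^2\,d\mu \;\le\; \|T^*\chi_{\Lambda Q}\|_{L^2(\mu)}^2 \;\lesssim\; A\,\mu(\Lambda Q).
$$

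For the far-field piece, I would show that $T^*\chi_{X\setminus\Lambda Q}$ has uniformly bounded oscillation on $Q$. For $x\in Q$ and $y\in X\setminus\Lambda Q$ one has $\rho(x_Q,y)\gtrsim \Lambda\,\mathrm{diam}(Q)\ge \delta^{-1}\rho(x,x_Q)$ provided $\Lambda$ is large enough, so the smoothness estimate for $k$ gives
$$
|k(y,x)-k(y,x_Q)| \;\lesssim\; C_{CZ}\,\frac{\mathrm{diam}(Q)^{\tau}}{\rho(x_Q,y)^{\tau+m}}.
$$
Integrating against $\mu$ and slicing $X\setminus\Lambda Q$ into annuli $2^{k+1}\Lambda Q\setminus 2^k\Lambda Q$, the hypothesis $\mu(2^{k+1}\Lambda Q)\le K(2^{k+1}\Lambda)^m\mathrm{diam}(Q)^m$ on $Q$ turns the result into a geometric series:
$$
|T^*\chi_{X\setminus\Lambda Q}(x)-T^*\chi_{X\setminus\Lambda Q}(x_Q)| \;\lesssim\; \sum_{k\ge 0} \frac{\mathrm{diam}(Q)^{\tau}\cdot K(2^k\Lambda)^m\mathrm{diam}(Q)^m}{(2^k\Lambda\,\mathrm{diam}(Q))^{\tau+m}} \;\lesssim\; \frac{K}{\Lambda^{\tau}} \;\lesssim\; 1.
$$
Therefore $|T^*\chi_{X\setminus\Lambda Q}-c_Q|\lesssim 1$ on $Q$ and $\int_Q|T^*\chi_{X\setminus\Lambda Q}-c_Q|^2\,d\mu \lesssim \mu(Q)\le \mu(\Lambda Q)$.

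Combining the two pieces yields the pseudo-BMO bound with constants depending only on $A$, $K$, $C_{CZ}$, $\tau$, $m$ and on $\Lambda$. The only genuine subtleties are (i) choosing $\Lambda$ large enough compared to $\delta^{-1}$ so that the Hölder kernel estimate applies on $X\setminus\Lambda Q$, and (ii) making sense of ``$T^*\chi_{\Lambda Q}$'' for the not-quite-dyadic set $\Lambda Q$; both are handled by a routine covering of $\Lambda Q$ by a bounded number of cubes of comparable scale. Notably, the extra hypothesis $|k(x,y)|\le 1/\max(d(x)^m,d(y)^m)$ is not needed here — it intervenes in the treatment of terminal cubes in $\sigma_3^{term}$, whereas the pseudo-BMO estimate rests only on the standard kernel estimates, the $T1$/$T^*1$ condition, and the Ahlfors-type control $\mu(sQ)\lesssim s^m \mathrm{diam}(Q)^m$ assumed in the definition of the pseudo-BMO norm.
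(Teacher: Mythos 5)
Your proof is correct and follows essentially the same route as the paper's: the same splitting $T^*1=T^*\chi_{\Lambda Q}+T^*\chi_{X\setminus\Lambda Q}$, the $T^*1$ testing condition for the local piece, and the kernel smoothness estimate summed over annuli (using the hypothesis $\mu(sQ)\lesssim s^m\mathrm{diam}(Q)^m$ built into the pseudo-BMO definition) to show the far-field piece has bounded oscillation on $Q$. Your added care about covering $\Lambda Q$ by boundedly many lattice cubes before invoking \eqref{T1}, and your closing remark that the extra bound $|k(x,y)|\le 1/\max(d(x)^m,d(y)^m)$ plays no role here, are both accurate refinements of what the paper leaves implicit.
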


\begin{proof}
For $x\in Q$ we write $T^*1(x) = (T^*\chi_{\La Q})(x) + (T^*\chi_{X\setminus\La Q})(x)=: \vf(x) +\psi(x)$.
First, we notice that 
$$
x,y \in Q\Rightarrow |\psi(x)-\psi(y)| \le C(K,\La)\,,
$$ where $K$ is the constant form our definition above. This is easy:
$$
|\psi(x)-\psi(y)|\le \int_{X\setminus\Lambda Q} |k(x,t)-k(y,t)| \,d\mu(t)= \sum_{j=1}^{\infty} \int_{\La^{j+1} Q\setminus \La^j Q} |k(x,t)-k(y,t)| \,d\mu(t) \le
$$
$$
 \sum_{j=1}^{\infty} \frac{\textnormal{diam}(Q)^{\tau}}{(\La^j\textnormal{diam}(Q))^{m+\tau}}\, K (\La^j\textnormal{diam}(Q))^{m}=\sum_{j=1}^\infty\frac{K}{\Lambda^{j\tau}}\le C(K,\La, \tau)\,.
$$

Therefore,
$$
\int_Q |\psi(x)-\langle \psi\rangle_Q|^2\,d\mu(x) \lesssim \mu(Q)\leq\mu(\Lambda Q)\,.
$$
But, 
$$
\int_Q |\vf(x)-\langle \vf\rangle_Q|^2\,d\mu(x) \lesssim \int_Q |T^*\chi_{\La Q}|^2\,d\mu \le A\,\mu(\La Q)
$$
by the $T1$ assumption of Theorem \ref{md}.

\end{proof}

\begin{lemma}
\label{paraproductlm}
Let $\mu$, $T$ satisfy the assumptins of Theorem \ref{md}. Then
\begin{equation}
\label{paraproduct}
\|\Pi_{T^*1}\|_{L^2(X;\mu)\to L^2(X;\mu)} \leq C\,.
\end{equation}
Here $C$ depends only on the constants of  Theorem \ref{md}.
\end{lemma}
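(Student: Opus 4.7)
The plan is to combine orthogonality of the $\Delta_Q$ projections with a nonhomogeneous Carleson embedding, the latter being driven by the pseudo-BMO estimate of Lemma \ref{Tstar1}.

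First I would rewrite the paraproduct in the form
$$\Pi_{T^*1}g = \sum_{Q} \langle g\rangle_{R(Q)}\, \Delta_Q(T^*1),$$
where the sum runs over good transit cubes $Q\in\mathcal{D}_1$ and $R(Q)\in\mathcal{D}_2^{tr}$ is the transit cube at scale $\kappa^r s(Q)$ containing $Q$ (well-defined because good cubes sit deep inside some $\mathcal{D}_2$-child). Using the orthogonality relations of Lemma \ref{easyproperties}(2),(5), the Pythagorean identity then gives
$$\|\Pi_{T^*1}g\|^2_{L^2(\mu)} = \sum_Q |\langle g\rangle_{R(Q)}|^2\,\|\Delta_Q(T^*1)\|^2_{L^2(\mu)}.$$
Grouping by $R$ and setting $\alpha_R := \sum_{Q:\,R(Q)=R}\|\Delta_Q(T^*1)\|^2_{L^2(\mu)}$, the right-hand side becomes $\sum_R |\langle g\rangle_R|^2\,\alpha_R$.

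Second, the inequality $\sum_R |\langle g\rangle_R|^2\,\alpha_R\lesssim \|g\|^2_{L^2(\mu)}$ is a Carleson embedding. In the nonhomogeneous NTV framework it is equivalent to the packing condition
$$\sum_{R\in\mathcal{D}_2^{tr},\,R\subset R_0}\alpha_R \;\lesssim\; \mu(\Lambda R_0)\qquad \text{for every } R_0\in\mathcal{D}_2^{tr}.$$
To verify this I would use that any good $Q$ with $R(Q)\subset R_0$ actually lies inside $R_0$ (since $Q$ is separated from $sk\,R_0$ by at least $s(Q)^{\alpha}s(R_0)^{1-\alpha}>0$), and that the $\Delta_Q(T^*1)$ are pairwise orthogonal. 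By Lemma \ref{Riesz} applied on a slight enlargement of $R_0$,
$$\sum_{R\subset R_0}\alpha_R \;\leq\; \|T^*1 - \langle T^*1\rangle_{R_0}\|^2_{L^2(R_0;\mu)}.$$
Lemma \ref{Tstar1} (pseudo-BMO of $T^*1$) then bounds this by $C\,\mu(\Lambda R_0)$, which is the required packing estimate.

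The main obstacle is the Carleson embedding step itself: since $\mu$ is nondoubling, the usual argument via the weak-$(1,1)$ inequality for the dyadic maximal function has to be adapted to the transit framework, using Lemma \ref{obv2} to retain the bound $\mu(R)\lesssim s(R)^m$ precisely on the cubes that actually contribute. A secondary care point is the mismatch between the two lattices $\mathcal{D}_1$ (where the $\Delta_Q$ live) and $\mathcal{D}_2$ (where the averages $\langle g\rangle_R$ are taken); the dilation factor $\Lambda$ in the pseudo-BMO definition is exactly what absorbs this mismatch when passing from the natural $\mathcal{D}_1$-hull of $R_0$ back to a ball comparable to $R_0$.
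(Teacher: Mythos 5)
Your first two steps (orthogonality of the $\Delta_Q$, reduction to a packing condition via the dyadic Carleson embedding theorem) match the paper's argument. The gap is in the verification of the packing condition. The dyadic Carleson embedding theorem for a general (nondoubling) measure requires
$$
\sum_{R\subset R_0}\alpha_R\;\lesssim\;\mu(R_0)
$$
with the measure of $R_0$ itself on the right-hand side; this is \emph{necessary} (test the embedding on $g=\chi_{R_0}$). Your verification, which applies the pseudo-BMO estimate of Lemma \ref{Tstar1} directly to the cube $R_0$, only yields $C\,\mu(\Lambda R_0)$, and for a nondoubling $\mu$ there is no way to pass from $\mu(\Lambda R_0)$ back to $C\mu(R_0)$: the mass of $\mu$ near, but outside, $R_0$ is completely uncontrolled. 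So the condition you establish is strictly weaker than the one the embedding theorem needs, and your closing remark that ``the dilation factor $\Lambda$ \dots is exactly what absorbs this mismatch'' has it backwards --- the dilation \emph{creates} the mismatch. (You have also misplaced the difficulty in the Carleson step: the dyadic embedding theorem itself holds verbatim for arbitrary measures; no adaptation of the maximal function argument is needed.)

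The missing idea, which is the heart of the paper's proof, is a Whitney decomposition of $R_0$ into pieces $P$ with $1.5P\subset R_0$ and bounded overlap of the dilates $1.4P$. Because every good transit $Q$ contributing to $\sum_{R\subset R_0}\alpha_R$ satisfies $\dist(Q,\partial R_0)\ge s(Q)^{\alpha}s(R_0)^{1-\alpha}$, each such $Q$ is contained in $1.2P$ for some Whitney piece $P$, and moreover any $P$ meeting such a $Q$ inherits the growth condition $\mu(sP)\lesssim s^m s(P)^m$ needed to invoke Lemma \ref{Tstar1} (this hypothesis also has to be checked, which your argument omits even for $R_0$). Applying pseudo-BMO on each $1.2P$ gives a bound by $C\mu(1.4P)$, and since the $1.4P$ stay inside $R_0$ with bounded multiplicity, summation yields $C\mu(R_0)$ --- the correct packing bound. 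Without this localization step the proof does not close.
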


\begin{proof}

Let $F=T^*1$. In the definition of $\Pi_F$ all $\Delta_Q$ are mutually orthogonal. So it is easy to see that
$$
\|\Pi_F g\|_{L^2(X;\mu)}^2 = \sum_{R\in \mathcal{D}_2,\,R\subset R^0}|\langle g\rangle_R|^2\sum_{Q\in\mathcal{D}_1,\,Q\,\text{good and transit},\,s(Q)= \kappa^{-r}  s(R)} \|\Delta_Q F\|_{L^2(X;\mu)}\,.
$$

Put 
$$
a_R:= \sum_{Q\in\mathcal{D}_1,\,Q\,\text{good and transit},\,s(Q)= \kappa^{-r}  s(R)} \|\Delta_Q F\|_{L^2(X;\mu)}\,.
$$

By Carleson Embedding Theorem, it is enough to prove that for every $S\in \Dk_2$

\begin{equation}
\label{cet}
\sum_{R\in \Dk_2, R\subset S} a_R \le C\,\mu(S)\,.
\end{equation}

This is the same as 

\begin{equation}
\label{cet1}
\sum_{Q\in\mathcal{D}_1,\,Q\,\text{transit},\,s(Q)\le \kappa^{-r}  s(R), \dist(Q, \pd R) \ge s(Q)^{\alpha}s(R)^{1-\alpha}} \|\Delta_Q F\|_{L^2(X;\mu)} \le C\, \mu(R)\,.
\end{equation}

Let us consider a Whitney decomposition of $R$ into disjoint cubes $P$, such that $1.5 P \subset R$, $1.4 P$ have only bounded multiplicity $C(d)$ of intersection.  This can be accomplished by modifying the arguments found in Section 7 of \cite{HM}.

Consider the sums
\begin{equation}
\label{cet2}
s_P:=\sum_{Q\in\mathcal{D}_1,\,Q\,\text{transit},\,s(Q)\le \kappa^{-r}  s(R), Q\cup P\neq \emptyset, \dist(Q, \pd R) \ge s(Q)^{\alpha}s(R)^{1-\alpha}} \|\Delta_Q F\|_{L^2(X;\mu)}\,.
\end{equation}

This $s_P$ can be zero if there is no transit cubes as above intersecting it. But if $s_P\neq 0$ then necessarily
$$
\mu(P)\le A(d)s(P)^m\,,
$$ 
and moreover
$$
\mu(sP)\le A(d)\,s^m\,s(P)^m\,,\,\,\forall s\ge 1\,.
$$

In fact, in this case $P$ intersects a transit cube $Q$, which by elementary geometry is ``smaller"'\ than $P$: $s(Q) \le c(r,d) s(P)$. But then the above inequalities follow from the definition of {\it transit}.

It is also clear that for large $r$ and for $Q,P$ as above
$$ 
Q\cap P\neq \emptyset\Rightarrow Q\subset 1.2 \,P\,.
$$

Therefore,
$$
s_P\neq 0\Rightarrow s_P \le \sum_{Q\in\mathcal{D}_1,\,Q\,\text{transit},\,s(Q)\le \kappa^{-r}  s(R), Q\subset 1.2\, P\, \dist(Q, \pd R) \ge s(Q)^{\alpha}s(R)^{1-\alpha}} \|\Delta_Q F\|_{L^2(X;\mu)}\,.
$$
So

$$
s_P\neq 0\Rightarrow s_P \le \int_{1.2P} |F-\langle F\rangle_{1.2\,P}|^2\,d\mu\le C\,\mu(1.4\, P)\,.
$$
The last inequality follows from Lemma
\ref{Tstar1}.

Now we add all $s_P$'s. We get $\le C\, \sum \mu (1.4\,P)$. This is smaller than $C_1\, \mu(R)$ as $1.4P$'s have multiplicity $C(d)<\infty$.
\end{proof}

\section{The Diagonal Sum: Controlling Term $\sigma_1$.}
\label{diag}

To complete the estimate of $|(Tf_{good},g_{good})|$ in only remains to estimate $\sigma_1$. But notice that
$$
\|\Delta_Q f\|_{L^1(X;\mu)} \le \|\Delta_Q f\|_{L^2(X;\mu)} \sqrt{ \mu(Q)}\textnormal{ and } \|\Delta_R g\|_{L^1(X;\mu)} \le \|\Delta_R g\|_{L^2(X;\mu)} \sqrt{ \mu(R)}\,.
$$ 

Remember that all cubes $Q$ and $R$ in the sums considered at this point are transit cubes.  In particular, in $\sigma_1$ we have that $Q$ and $R$ are close and of the almost same size. If a son of $Q$, $S(Q)$, is terminal, then by Lemma \ref{obv1}
$$
|(T\chi_{S(Q)}\Delta_Q f, \Delta_R g)|\le \frac{\sqrt{ \mu(Q)}\sqrt{ \mu(R)}}{s(Q)^m} \|\Delta_Q f\|_{L^2(X;\mu)}  \|\Delta_R g\|_{L^2(X;\mu)} \,.
$$
The sons are terminal, but $Q$ and $R$ are transit, so $\mu(Q) \lesssim s(Q)^m\approx s(R)^m$.
Summing such pairs (and symmetric ones, where a son of $R$ is terminal) we get $C(r)\|f\|_{L^2(X;\mu)}\|g\|_{L^2(X;\mu)}$.

We are left with the part of $\sigma_1$, where we sum over $Q$ and $R$ such that their sons are transit. Then we use pairing
$$
|(T\chi_{S(Q)}\Delta_Q f, \chi_S(R)\Delta_R g)|\le |c_{S(Q)}||c_{S(R)}|\sqrt{\mu(S(Q))\mu(S(R))}\,.
$$ 
The estimate above follows from our $T1$ assumption in Theorem \ref{md}.
Now using \eqref{cQR}, again we obtain
$$
|(T\chi_{S(Q)}\Delta_Q f, \chi_{S(R)}\Delta_R g)|\le C\, \|\Delta_Q f\|_{L^2(X;\mu)}  \|\Delta_R g\|_{L^2(X;\mu)} \,.
$$

This completes the proof of Theorem \ref{md}.

\begin{bibdiv}
\begin{biblist}

%\bib{ARS}{article}{
%   author={Arcozzi, N.},
%   author={Rochberg, R.},
%   author={Sawyer, E.},
%   title={Carleson measures for the Drury-Arveson Hardy space and other
%   Besov-Sobolev spaces on complex balls},
%   journal={Adv. Math.},
%   volume={218},
%   date={2008},
%   number={4},
%   pages={1107--1180}
%}

%\bib{B}{article}{
%   author={Bekoll{\'e}, David},
%   title={In\'egalit\'e \`a poids pour le projecteur de Bergman dans la
%   boule unit\'e de ${\bf C}\sp{n}$},
%   language={French},
%   journal={Studia Math.},
%   volume={71},
%   date={1981/82},
%   number={3},
%   pages={305--323}
%}

\bib{C}{article}{
   author={Christ, Michael},
   title={A $T(b)$ theorem with remarks on analytic capacity and the Cauchy
   integral},
   journal={Colloq. Math.},
   volume={60/61},
   date={1990},
   number={2},
   pages={601--628}
}

\bib{HM}{article}{
   author={Hyt\"onen, T.},
   author={Martikainen, H.},
   title={Non-Homogeneous Tb Theorem on Metric Spaces}
   pages={preprint}
}

\bib{NTV1}{article}{
   author={Nazarov, F.},
   author={Treil, S.},
   author={Volberg, A.},
   title={The $Tb$-theorem on non-homogeneous spaces},
   journal={Acta Math.},
   volume={190},
   date={2003},
   number={2},
   pages={151--239}
}

\bib{NTV2}{article}{
   author={Nazarov, F.},
   author={Treil, S.},
   author={Volberg, A.},
   title={Accretive system $Tb$-theorems on nonhomogeneous spaces},
   journal={Duke Math. J.},
   volume={113},
   date={2002},
   number={2},
   pages={259--312}
}

\bib{NTV3}{article}{
   author={Nazarov, F.},
   author={Treil, S.},
   author={Volberg, A.},
   title={Weak type estimates and Cotlar inequalities for Calder\'on-Zygmund
   operators on nonhomogeneous spaces},
   journal={Internat. Math. Res. Notices},
   date={1998},
   number={9},
   pages={463--487}
}
		
\bib{NTV4}{article}{
   author={Nazarov, F.},
   author={Treil, S.},
   author={Volberg, A.},
   title={Cauchy integral and Calder\'on-Zygmund operators on nonhomogeneous
   spaces},
   journal={Internat. Math. Res. Notices},
   date={1997},
   number={15},
   pages={703--726}
}

\bib{SW}{article}{
   author={Sawyer, E.},
   author={Wheeden, R. L.},
   title={Weighted inequalities for fractional integrals on Euclidean and
   homogeneous spaces},
   journal={Amer. J. Math.},
   volume={114},
   date={1992},
   number={4},
   pages={813--874}
}

%\bib{T1}{article}{
%   author={Tchoundja, E.},
%   title={Carleson measures for Hardy-Sobolev spaces},
%   journal={Complex Var. Elliptic Equ.},
%   volume={53},
%   date={2008},
%   number={11},
%   pages={1033--1046}
%}

%\bib{T2}{article}{
%   author={Tchoundja, E.},
%   title={Carleson measures for the generalized Bergman spaces via a
%   $T(1)$-type theorem},
%   journal={Ark. Mat.},
%   volume={46},
%   date={2008},
%   number={2},
%   pages={377--406}
%}

%\bib{Tolsa1}{article}{
%   author={Tolsa, X.},
%   title={$L\sp 2$-boundedness of the Cauchy integral operator for
%   continuous measures},
%   journal={Duke Math. J.},
%   volume={98},
%   date={1999},
%   number={2},
%   pages={269--304}
%}

%\bib{Tolsa2}{article}{
%   author={Tolsa, X.},
%   title={A $T(1)$ theorem for non-doubling measures with atoms},
%   journal={Proc. London Math. Soc. (3)},
%   volume={82},
%   date={2001},
%   number={1},
%   pages={195--228}
%}

%\bib{Verdera}{article}{
%   author={Verdera, J.},
%   title={On the $T(1)$-theorem for the Cauchy integral},
%   journal={Ark. Mat.},
%   volume={38},
%   date={2000},
%   number={1},
%   pages={183--199}
%}

\bib{V}{book}{
   author={Volberg, A.},
   title={Calder\'on-Zygmund capacities and operators on nonhomogeneous
   spaces},
   series={CBMS Regional Conference Series in Mathematics},
   volume={100},
   publisher={Published for the Conference Board of the Mathematical
   Sciences, Washington, DC},
   date={2003},
   pages={iv+167},
   isbn={0-8218-3252-2}
}

\bib{VW}{article}{
  author={Volberg, A.},
  author={Wick, B. D.},
  title={Bergman-type Singular Operators and the Characterization of Carleson Measures for Besov--Sobolev Spaces on the Complex Ball},
  date={2009},
  pages={preprint}
}

\end{biblist}
\end{bibdiv}

\end{document}